\newtheorem{theorem}{Theorem}[section]
\newtheorem{corollary}[theorem]{Corollary}
\newtheorem{proposition}[theorem]{Proposition}
\newtheorem{lemma}[theorem]{Lemma}
\newtheorem{question}[theorem]{Question}
\theoremstyle{definition}
\newtheorem{definition}[theorem]{Definition}
\begin{document}
\noindent \vspace{0.5in}

\title[Locally $\sigma$-compact rectifiable spaces]
{Locally $\sigma$-compact rectifiable spaces}

\author{Fucai Lin}
\address{(Fucai Lin): School of mathematics and statistics,
Minnan Normal University, Zhangzhou 363000, P. R. China}
\email{linfucai2008@aliyun.com}

\author{Jing Zhang}
\address{(Jing Zhang) School of mathematics and statistics,
Minnan Normal University, Zhangzhou 363000, P. R. China}
\email{zhangjing86@126.com}

\author{Kexiu Zhang}
\address{(Kexiu Zhang) School of mathematics and statistics,
Minnan Normal University, Zhangzhou 363000, P. R. China}
\email{kxzhang2006@163.com}

\thanks{Supported by the NSFC (Nos. 11201414, 11471153), the Natural Science Foundation of Fujian Province (No. 2012J05013) of China
and Training Programme Foundation for Excellent Youth Researching Talents
of Fujian's Universities (JA13190).}

\keywords{rectifiable spaces; locally $\sigma$-compact; locally compact; paracompact; zero-dimension; $k_{\omega}$-spaces; rectifiable complete spaces.}
\subjclass[2000]{22A05; 54E20; 54E35; 54H11.}

\begin{abstract}
A topological space $G$ is said to be a {\it rectifiable space}
provided that there are a surjective homeomorphism $\varphi :G\times
G\rightarrow G\times G$ and an element $e\in G$ such that
$\pi_{1}\circ \varphi =\pi_{1}$ and for every $x\in G$,
$\varphi (x, x)=(x, e)$, where $\pi_{1}: G\times G\rightarrow G$ is
the projection to the first coordinate. In this paper, we first prove that each locally compact rectifiable space is paracompact, which gives an affirmative answer to Arhangel'skii and Choban's question (Arhangel'skii and Choban \cite{A2009}). Then we prove that every locally $\sigma$-compact rectifiable space with a $bc$-base is
locally compact or zero-dimensional, which improves Arhangel'skii and van Mill's result (Arhangel'skii and van Mill \cite{A2014}). Finally, we prove that each $k_{\omega}$-rectifiable space is rectifiable complete.
\end{abstract}

\maketitle

\section{Introduction}
Recall that a {\it paratopological group} $G$ is a group $G$ with a
 topology such that the product mapping of $G \times G$ into
$G$ is jointly continuous. The space $G$ is called a {\it topological group} if it is a paratopological group and the inverse mapping of $G$ onto itself
associating $x^{-1}$ with arbitrary $x\in G$ is continuous. In addition,
the space $G$ is said to be a {\it rectifiable space}
provided that there are a surjective homeomorphism $\varphi :G\times
G\rightarrow G\times G$ and an element $e\in G$ such that
$\pi_{1}\circ \varphi =\pi_{1}$ and for every $x\in G$,
$\varphi (x, x)=(x, e)$, where $\pi_{1}: G\times G\rightarrow G$ is
the projection to the first coordinate. If $G$ is a rectifiable
space, then $\varphi$ is called a {\it rectification} on $G$. It is
well known that rectifiable spaces and paratopological groups are
two good generalizations of topological groups. In fact, for a
topological group with the neutral element $e$, it is easy to
see that the mapping $\varphi (x, y)=(x, x^{-1}y)$ is a rectification on
$G$. However, the 7-dimensional sphere $S_{7}$ is
rectifiable but not a topological group \cite[$\S$ 3]{V1990}. The Sorgenfrey line $G$ is a paratopological group with no rectification on $G$.
Further, it is easy to see that
each rectifiable space is homogeneous. Recently, the study of rectifiable spaces has become an interesting topic in topological algebra, see \cite{A20092, A2009, G1996, HK, LF4, LF3, LF2, LF1, LLL}. In particular,
the following theorem plays an important role in the study of rectifiable spaces.

\begin{theorem}\cite{C1992, G1996, V1989}\label{t4}
A topological space $G$ is rectifiable if and only if there exist an element $e\in G$ and two
continuous mappings $p: G^{2}\rightarrow G$, $q: G^{2}\rightarrow G$
such that for any $x, y\in G$ the next
identities hold:
\smallskip
$$p(x, q(x, y))=q(x, p(x, y))=y\ \mbox{and}\ q(x, x)=e.$$
\end{theorem}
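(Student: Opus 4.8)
The plan is to read the two maps $p$ and $q$ directly off the rectification $\varphi$ and its inverse, exploiting the fact that both preserve the first coordinate.

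For the forward direction, suppose $G$ is rectifiable with rectification $\varphi$. Since $\pi_{1}\circ\varphi=\pi_{1}$, the homeomorphism $\varphi$ never alters the first coordinate, so I would set $q:=\pi_{2}\circ\varphi$ and write $\varphi(x,y)=(x,q(x,y))$; continuity of $q$ is inherited from that of $\varphi$. The normalization $\varphi(x,x)=(x,e)$ gives at once $q(x,x)=e$. The key observation is that $\varphi^{-1}$ likewise fixes the first coordinate: applying $\pi_{1}$ to $\varphi\circ\varphi^{-1}=\mathrm{id}$ yields $\pi_{1}\circ\varphi^{-1}=\pi_{1}$, so I may set $p:=\pi_{2}\circ\varphi^{-1}$ and write $\varphi^{-1}(x,y)=(x,p(x,y))$ with $p$ continuous. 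Evaluating $\varphi\circ\varphi^{-1}=\mathrm{id}$ and $\varphi^{-1}\circ\varphi=\mathrm{id}$ at an arbitrary $(x,y)$ then produces exactly $q(x,p(x,y))=y$ and $p(x,q(x,y))=y$.

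For the converse, given $e$, $p$, $q$ with the stated identities, I would define $\varphi(x,y):=(x,q(x,y))$ and $\psi(x,y):=(x,p(x,y))$, both continuous since each coordinate is. The two identities say precisely that $\psi\circ\varphi=\mathrm{id}$ and $\varphi\circ\psi=\mathrm{id}$ on $G\times G$, so $\varphi$ is a homeomorphism with inverse $\psi$. By construction $\pi_{1}\circ\varphi=\pi_{1}$, and $q(x,x)=e$ forces $\varphi(x,x)=(x,e)$; hence $\varphi$ is a rectification on $G$.

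I do not expect any serious obstacle: the whole argument amounts to tracking the second-coordinate components of $\varphi$ and $\varphi^{-1}$. The single point deserving a moment's care is the verification that $\varphi^{-1}$ preserves the first coordinate, which is a formal consequence of $\pi_{1}\circ\varphi=\pi_{1}$ together with bijectivity, and after that continuity of $p$ and $q$ comes for free from the continuity of $\varphi$ and $\varphi^{-1}$.
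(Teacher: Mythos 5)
Your argument is correct and is exactly the construction the paper indicates in the remark following Theorem~\ref{t4}, namely $q=\pi_{2}\circ\varphi$ and $p=\pi_{2}\circ\varphi^{-1}$, with the observation that $\pi_{1}\circ\varphi^{-1}=\pi_{1}$ follows formally from $\pi_{1}\circ\varphi=\pi_{1}$. Both directions are verified cleanly, so there is nothing to add.
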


Given a rectification $\varphi$ of the rectifiable space $G$, we can obtain the mappings $p$ and $q$ in Theorem~\ref{t4} as follows. Let $p=\pi_{2}\circ\varphi^{-1}$ and
$q=\pi_{2}\circ\varphi$. Then the mappings $p$ and $q$ satisfy the identities in
Theorem~\ref{t4}, and both are open mappings.

Let $G$ be a rectifiable space, and let $p$ be the multiplication on
$G$. Then we sometimes write $x\cdot y$ instead of $p(x, y)$ and
$A\cdot B$ instead of $p(A, B)$ for any $A, B\subset G$. Obviously,
$q(x, y)$ is an element in $G$ such that $x\cdot q(x, y)=y$. Moreover, since $x\cdot
e=x\cdot q(x, x)=x$ and $x\cdot q(x, e)=e$, it follows that $e$ is a right neutral
element for $G$ and $q(x, e)$ is a right inverse for $x$. Therefore, a
rectifiable space $G$ is a topological algebraic system with
operations $p$ and $q$, a 0-ary operation $e$, and identities as above. Further, the multiplication operation $p$ of this algebraic system need not satisfy the
associative law. Note that
every topological loop is rectifiable.

In this paper, we mainly discuss the topological properties of locally $\sigma$-compact rectifiable spaces. The paper is organized as follows:

In Section 2, we mainly prove that each locally compact rectifiable space is paracompact, which gives an affirmative answer to a question of A.V. Arhangel'skii and M.M. Choban.

In Section 3, we mainly prove that every locally $\sigma$-compact rectifiable space with a $bc$-base is
locally compact or zero-dimensional, which improves a result of A.V. Arhangel'skii and J. van Mill.

In Section 4, we mainly prove that each $k_{\omega}$-rectifiable space is rectifiable complete.

All the spaces considered in this paper are supposed to be Hausdorff unless stated otherwise.
The notation $\omega$ denotes the first countable infinite order. The letter $e$
denotes the right neutral element
of a rectifiable space. For undefined notation and terminologies, the reader may refer to
\cite{A2008} and \cite{E1989}.

\section{locally compact rectifiable spaces}
In this section, we shall discuss the locally $\sigma$-compact rectifiable spaces, and  show that each locally $\sigma$-compact rectifiable space is paracompact. 

In \cite{A2009}, A.V. Arhangel'skii and M.M. Choban posed the following two questions:

\begin{question}\cite[Problem 5.9]{A2009}\label{q1}
Is every rectifiable $p$-space paracompact? What if the space is locally compact?
\end{question}

\begin{question}\cite[Problem 5.10]{A2009}\label{q2}
Is every rectifiable $p$-space a $D$-space?
\end{question}

It is well known that a paracompact $p$-space is a $D$-space. Therefore, if we can prove each rectifiable $p$-space is paracompact, then the answer to Question~\ref{q2} is also positive. However, we will prove that each locally compact rectifiable space is paracompact, which gives
an affirmative answer to the second part of Question~\ref{q1}, see Corollary~\ref{c0}. By this result, we give a partial answer to Question~\ref{q2}, see Corollary~\ref{c3}. First of all, we give some concepts and technical lemmas.

A space $X$ is called {\it locally compact} (resp. {\it locally $\sigma$-compact}) if, for each $x\in X$, there exists a neighborhood $U$ at $x$ in $X$ such that the closure of $U$ in $X$ is  compact (resp. $\sigma$-compact).

\begin{definition}\cite{LLL}
Let $H$ be a subspace of a rectifiable space $G$. Then $H$ is called {\it a rectifiable subspace of $G$} if we have $p(H, H)\subset H$ and $q(H, H)\subset H$.
\end{definition}

\begin{lemma}\label{p0}
If $A$ is subset of a rectifiable space $G$, then $H=\bigcup_{n\in\mathbb{N}}(A_{n}\cup B_{n})$ is the smallest rectifiable subspace containing the set $A$, where $A_{1}=A, B_{1}=q(A, e)\cup q(A, A)$, $A_{n+1}=p(A_{n}\cup B_{n}, A_{n}\cup B_{n})$ and $B_{n+1}=q(A_{n}\cup B_{n}, A_{n}\cup B_{n})$ for each $n\in\mathbb{N}.$ Moreover, $H$ is open if $A$ is an open subset in $G$.
\end{lemma}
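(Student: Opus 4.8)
The plan is to exploit the increasing structure of the sets $C_n := A_n \cup B_n$ and to reduce every assertion to the defining identities $p(x, q(x,y)) = q(x, p(x,y)) = y$ and $q(x,x) = e$ of Theorem~\ref{t4} (I assume $A \neq \emptyset$, the empty case being trivial). First I would record two consequences of these identities: putting $y = x$ in $p(x, q(x,y)) = y$ yields $p(x, e) = x$, so $e$ is a right unit for $p$, while $q(x,x) = e$. Using them I would prove simultaneously by induction that $e \in C_n$ and $C_n \subseteq C_{n+1}$ for every $n$: for $n = 1$, $e = q(a,a) \in q(A,A) \subseteq C_1$ for any $a \in A$; and if $e \in C_n$, then each $c \in C_n$ satisfies $c = p(c,e) \in p(C_n, C_n) = A_{n+1}$ and $e = q(c,c) \in q(C_n,C_n) = B_{n+1}$, giving $C_n \subseteq C_{n+1}$ and $e \in C_{n+1}$. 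Hence $\{C_n\}$ is an increasing chain with union $H$.

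The monotonicity makes the remaining algebraic claims routine. For closure, given $x,y \in H$ I pick $k$ with $x,y \in C_k$, so that $p(x,y) \in p(C_k,C_k) = A_{k+1} \subseteq H$ and $q(x,y) \in q(C_k,C_k) = B_{k+1} \subseteq H$; thus $H$ is a rectifiable subspace, and clearly $A = A_1 \subseteq H$. For minimality, let $K$ be any rectifiable subspace with $A \subseteq K$; since $K \neq \emptyset$ it contains $e = q(a,a)$, and then an induction based on $A_{n+1} = p(C_n,C_n) \subseteq p(K,K) \subseteq K$ and $B_{n+1} = q(C_n,C_n) \subseteq q(K,K) \subseteq K$ shows $C_n \subseteq K$ for all $n$, whence $H \subseteq K$.

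The delicate point, and the main obstacle, is the openness of $H$ when $A$ is open. The naive idea of pushing opens forward through the open maps $p$ and $q$ fails already at $B_1$: the set $q(A,e)$ is the image of $A \times \{e\}$, which is not open in $G^2$, so openness of $q$ says nothing. Instead I would use a translation argument. For fixed $a$, the left translation $p(a, \cdot)$ is a homeomorphism of $G$ with inverse $q(a, \cdot)$, directly from the two identities. Fixing $u_0 \in A$ and an arbitrary $h \in H$, I consider $\Phi(y) := p(h, q(u_0,y))$, a composition of the homeomorphisms $q(u_0,\cdot)$ and $p(h,\cdot)$ and hence a homeomorphism of $G$. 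Then $\Phi(u_0) = p(h,e) = h$, and $\Phi(A) \subseteq H$ because $u_0, h \in H$ and $H$ is closed under $p$ and $q$; so $\Phi(A)$ is an open neighborhood of $h$ inside $H$. As $h$ was arbitrary, $H$ is open. The crux is thus to recognize that once $H$ is known to be a rectifiable subspace containing the nonempty open set $A$, openness follows from homogeneity-type translations rather than from direct images.
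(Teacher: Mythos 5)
Your proposal is correct, and its core (the chain $C_n\subseteq C_{n+1}$, closure of $H$ under $p$ and $q$, and minimality by induction inside any rectifiable subspace $K\supseteq A$) matches the paper's argument, which however is much terser: the paper only verifies that $H$ is a rectifiable subspace, leaves minimality implicit, and dismisses openness with ``it follows from the construction.'' Where you genuinely add value is the openness claim. You are right that the naive route --- pushing open sets through the open maps $p$ and $q$ --- stalls at $B_1=q(A,e)\cup q(A,A)$, since $A\times\{e\}$ is not open in $G^2$, so the $C_n$ need not all be open and ``openness by construction'' is not literally immediate. Your translation argument fixes this cleanly: $q(u_0,A)$ is an open neighborhood of $e=q(u_0,u_0)$ contained in $H$ (as $q(u_0,\cdot)$ is a homeomorphism with inverse $p(u_0,\cdot)$), and then $h\cdot q(u_0,A)=\Phi(A)$ is an open neighborhood of $h=p(h,e)$ inside $p(H,H)\subseteq H$ for every $h\in H$. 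This is exactly the homogeneity-type completion the paper's one-line remark presupposes, and it is the only point where your write-up and the paper's proof differ in substance rather than in level of detail.
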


\begin{proof}
Since $q(A, A)\subset B_{1}$, we have $e\in B_{1}$. Therefore, it is easy to see that $A_{n}\cup B_{n}\subset A_{n+1}\cup B_{n+1}$ for each $n\in\mathbb{N}$. Next we shall prove that $H$ is a rectifiable subspace of $G$. Indeed, take arbitrary points $x, y\in H$. Then there exists an $n\in \mathbb{N}$ such that $x, y\in A_{n}\cup B_{n}$, and hence $p(x, y)\in A_{n+1}\cup B_{n+1}$ and $q(x, y)\in A_{n+1}\cup B_{n+1}$. Therefore, $H$ is a rectifiable subspace of $G$.

Obviously, it follows from the construction of $H$ that $H$ is open if $A$ is an open subset in $G$.
\end{proof}

\begin{lemma}\cite{LLL}\label{l1}
Let $G$ be a rectifiable space. If $H$ is an open rectifiable subspace of $G$, then $H$ is closed in $G$.
\end{lemma}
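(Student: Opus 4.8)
The plan is to prove the equivalent statement $\overline{H}\subseteq H$, so that $H$, being open and equal to its closure, is closed in $G$. First I would record that $e\in H$: picking any $h_{0}\in H$ (the case $H=\emptyset$ is trivial) and using that $H$ is a rectifiable subspace, we get $e=q(h_{0},h_{0})\in q(H,H)\subseteq H$. Thus $H$ is in particular an open neighbourhood of $e$, which is exactly what makes the continuity of $q$ at the diagonal usable.

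The core of the argument is to fix $x\in\overline{H}$ and reconstruct $x$ as a product of two elements of $H$. Since $q$ is continuous and $q(x,x)=e\in H$ with $H$ open, there is an open neighbourhood $U$ of $x$ with $q(U\times U)\subseteq H$. Because $x\in\overline{H}$, the open set $U$ meets $H$, so I can choose $h\in U\cap H$. Now both $h$ and $x$ lie in $U$, hence $k:=q(h,x)\in H$. Applying the rectification identity $p(a,q(a,b))=b$ of Theorem~\ref{t4} with $a=h$ and $b=x$ gives $x=p(h,q(h,x))=p(h,k)$ with $h,k\in H$. Since $H$ is a rectifiable subspace, $p(H,H)\subseteq H$, and therefore $x=p(h,k)\in H$. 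This proves $\overline{H}\subseteq H$, i.e. $H$ is closed in $G$.

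The step I expect to be the real obstacle --- and the reason a naive argument does not go through --- is the choice of how to ``divide''. The natural analogue of the group-theoretic proof would decompose $G$ into left cosets $p(x,H)$, which are open since each left translation $y\mapsto p(x,y)$ is a homeomorphism with inverse $q(x,\cdot)$; one would then want $p(x,H)\cap H\neq\emptyset$ to force $x\in H$. But this requires solving $p(x,h)=h'$ for the \emph{first} coordinate $x$, i.e. a right division, and in a rectifiable space right translations need not be injective, so no such operation is available and the argument stalls. The device above sidesteps this entirely: by multiplying on the left by an element $h\in H$ chosen close to $x$, the left factor that must be inverted is the \emph{known} element $h\in H$, and the only identity used, $p(h,q(h,x))=x$, inverts precisely the left translation by $h$, which is a homeomorphism. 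Thus the whole weight of the proof rests on selecting $h\in H$ near $x$ and on the continuity of $q$ at $(x,x)$.
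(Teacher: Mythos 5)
Your proof is correct. Note that the paper itself gives no proof of this lemma --- it is quoted from \cite{LLL} --- but your argument (choose $h\in H$ close to $x\in\overline{H}$ using continuity of $q$ at $(x,x)$, then recover $x=p(h,q(h,x))\in p(H,H)\subseteq H$) is exactly the standard one, and your discussion of why the naive coset decomposition fails in the absence of right division is on point. For what it is worth, the same conclusion also falls out in one line from Lemma~\ref{dense} of the paper: since $e=q(h_{0},h_{0})\in H$, the set $H$ is an open neighbourhood of $e$, so $\overline{H}\subseteq H\cdot H=p(H,H)\subseteq H$; but the proof of that lemma is essentially the computation you carried out.
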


\begin{lemma}\cite{LLL}\label{l2}
Let $G$ be a rectifiable space. If $H$ is a rectifiable subspace of $G$, then the closure of $H$ in $G$  is a rectifiable subspace.
\end{lemma}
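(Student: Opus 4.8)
The plan is to verify directly that the two defining inclusions for a rectifiable subspace, namely $p(\overline{H},\overline{H})\subset\overline{H}$ and $q(\overline{H},\overline{H})\subset\overline{H}$, survive under closure. The whole argument rests on the continuity of the operations $p$ and $q$ supplied by Theorem~\ref{t4}, together with two elementary facts from general topology: first, that in a product space $\overline{A}\times\overline{B}=\overline{A\times B}$; and second, that a continuous map $f$ satisfies $f(\overline{C})\subset\overline{f(C)}$ for every set $C$.

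First I would treat the multiplication $p$. Since $p$ is continuous and $\overline{H}\times\overline{H}=\overline{H\times H}$, I obtain
\[
p(\overline{H}\times\overline{H})=p(\overline{H\times H})\subset\overline{p(H\times H)}.
\]
Because $H$ is assumed to be a rectifiable subspace, $p(H\times H)\subset H$, whence $\overline{p(H\times H)}\subset\overline{H}$. Combining the two inclusions yields $p(\overline{H},\overline{H})\subset\overline{H}$, which is the first required condition. The map $q$ is then handled by the identical chain of inclusions, using the continuity of $q$ and the hypothesis $q(H\times H)\subset H$; this gives $q(\overline{H},\overline{H})\subset\overline{H}$. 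Together these two facts show that $\overline{H}$ is closed under both operations, hence a rectifiable subspace of $G$, as claimed.

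Honestly, I do not expect a genuine obstacle here: the statement is the standard ``closure of a subalgebra is a subalgebra'' phenomenon, and the only point requiring any care is that \emph{both} structural operations of the rectifiable space --- not merely the multiplication $p$ but also the auxiliary operation $q$ --- are continuous, so that the same closure estimate applies to each. That continuity is exactly what Theorem~\ref{t4} guarantees, so the argument proceeds uniformly for the two operations. If one preferred to avoid invoking the product-closure identity $\overline{A}\times\overline{B}=\overline{A\times B}$, an equivalent route is to argue pointwise: given points in $\overline{H}$, select nets in $H$ converging to them and push these nets through the continuous maps $p$ and $q$, using that $H$ is a rectifiable subspace to keep the net values inside $H$ and hence their limits inside $\overline{H}$. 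The closure-of-image formulation above is, however, the cleaner presentation.
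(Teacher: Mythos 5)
The paper gives no proof of this lemma --- it is quoted from \cite{LLL} --- so there is nothing internal to compare against. Your argument is correct and is the standard one: since a rectifiable subspace is by definition just a subset closed under $p$ and $q$, the chain $p(\overline{H}\times\overline{H})=p(\overline{H\times H})\subset\overline{p(H\times H)}\subset\overline{H}$ (and the same for $q$) settles the claim, using only the continuity of $p$ and $q$ from Theorem~\ref{t4}.
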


\begin{proposition}
Let $G$ be a rectifiable space. If $H$ is a locally compact rectifiable subspace of $G$, then $H$ is closed in $G$.
\end{proposition}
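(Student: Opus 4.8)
The plan is to compare $H$ with its closure $\overline{H}$ in $G$ and show that the two coincide. By Lemma~\ref{l2}, $\overline{H}$ is a rectifiable subspace of $G$; since it is closed under both $p$ and $q$ and contains $e=q(x,x)$ for any of its points, Theorem~\ref{t4} shows that $\overline{H}$, equipped with the restrictions of $p$ and $q$, is itself a rectifiable space. I shall treat $\overline{H}$ as the ambient rectifiable space and analyze the position of $H$ inside it. Observe that $H$, being a rectifiable subspace of $G$ contained in $\overline{H}$, is automatically a rectifiable subspace of $\overline{H}$, and by construction $H$ is dense in $\overline{H}$.

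The decisive step is to prove that $H$ is open in $\overline{H}$, and this is where local compactness enters. I would argue as follows. Fix $x\in H$ and choose an open set $W$ of $G$ with $x\in W$ and with $C=\mathrm{cl}_{H}(W\cap H)$ compact. Because $\overline{H}$ is Hausdorff, the compact set $C$ is closed in $\overline{H}$, and since $\mathrm{cl}_{H}(W\cap H)=\mathrm{cl}_{\overline{H}}(W\cap H)\cap H$, this forces $\mathrm{cl}_{\overline{H}}(W\cap H)\subseteq C\subseteq H$. On the other hand, density of $H$ in $\overline{H}$ together with openness of $W$ gives $W\cap\overline{H}\subseteq\mathrm{cl}_{\overline{H}}(W\cap H)$. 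Combining the two inclusions yields $W\cap\overline{H}\subseteq H$, so $x$ lies in the interior of $H$ relative to $\overline{H}$. As $x\in H$ was arbitrary, $H$ is open in $\overline{H}$.

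Having established that $H$ is an \emph{open} rectifiable subspace of the rectifiable space $\overline{H}$, I would apply Lemma~\ref{l1} with $\overline{H}$ in the role of the ambient rectifiable space to conclude that $H$ is closed in $\overline{H}$. But $H$ is dense in $\overline{H}$, so being both dense and closed forces $H=\overline{H}$. Therefore $H$ is closed in $G$, as required.

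I expect the main obstacle to be the second paragraph, namely the purely topological fact that a (dense) locally compact subspace of a Hausdorff space is open in its closure; this is the only place where the Hausdorff hypothesis and local compactness are genuinely used, and the inclusions must be tracked carefully between the three relevant subspace topologies (on $H$, on $\overline{H}$, and on $G$). Once that openness is in hand, the rectifiable structure does all the remaining work through a clean, essentially formal application of Lemmas~\ref{l2} and~\ref{l1}.
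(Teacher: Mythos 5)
Your proof is correct and follows essentially the same route as the paper's: pass to the closure $\overline{H}$, invoke Lemma~\ref{l2} to see it is a rectifiable subspace, observe that the locally compact dense subspace $H$ is open in $\overline{H}$, and conclude via Lemma~\ref{l1}. The only difference is that you spell out the standard topological fact that a locally compact dense subspace of a Hausdorff space is open in its closure, which the paper simply asserts.
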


\begin{proof}
Let $H_{1}$ be the closure of $H$ in $G$. Then it follows from Lemma~\ref{l2} that $H_{1}$ is a rectifiable subspace of $G$. Moreover, $H$ is a locally compact subspace of $H_{1}$, which implies that it is open in $H_{1}$. By Lemma~\ref{l1}, $H$ is closed in $H_{1}$, hence it is closed in $G$.
\end{proof}

Recall that a topological space is {\it $\kappa$-Lindel\"{o}f}, where $\kappa$ is any cardinal, if every open cover has a subcover of cardinality strictly less than $\kappa$.
An $\aleph_{1}$-Lindel\"{o}f space is called {\it Lindel\"{o}f}.
The {\it Lindel\"{o}f degree}, or {\it Lindel\"{o}f number $l(X)$}, is the smallest cardinal $\kappa$ such that every open cover of the space $X$ has a subcover of size at most $\kappa$.
Recall that a space is {\it strongly paracompact}
if every open cover has a star-finite open refinement. Obviously, each strongly paracompact space is paracompact.

The following lemma is well known.

\begin{lemma}\label{p1}
Let $X$ be a compact space and $Y$ be a $\kappa$-Lindel\"{o}f space. Then $X\times Y$ is a $\kappa$-Lindel\"{o}f space.
\end{lemma}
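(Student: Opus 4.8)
The plan is to run the classical tube-lemma argument, generalized from the ordinary Lindel\"{o}f case to the $\kappa$-Lindel\"{o}f case. Let $\mathcal{U}$ be an arbitrary open cover of $X\times Y$; the goal is to extract a subcover of cardinality strictly less than $\kappa$. I would fix $y\in Y$ and consider the slice $X\times\{y\}$, which is homeomorphic to $X$ and therefore compact. Hence finitely many members $U^{y}_{1},\dots,U^{y}_{n(y)}$ of $\mathcal{U}$ cover $X\times\{y\}$, and their union $N_{y}:=\bigcup_{i=1}^{n(y)}U^{y}_{i}$ is an open subset of $X\times Y$ containing the whole slice.

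The key step is the tube lemma: because $X$ is compact, for each $y$ there is an open neighbourhood $W_{y}$ of $y$ in $Y$ such that $X\times W_{y}\subset N_{y}$. To see this, cover $X\times\{y\}$ by basic open rectangles $V\times O\subset N_{y}$, use compactness of $X$ to reduce to finitely many rectangles $V_{1}\times O_{1},\dots,V_{m}\times O_{m}$ whose first factors already cover $X$, and set $W_{y}=\bigcap_{j=1}^{m}O_{j}$; then any point $(x,w)$ with $w\in W_{y}$ lies in some $V_{j}\times O_{j}\subset N_{y}$. This is the heart of the argument and the only place where the compactness of $X$ is used.

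Now $\{W_{y}:y\in Y\}$ is an open cover of $Y$, so by $\kappa$-Lindel\"{o}fness of $Y$ there is a set $A\subset Y$ with $|A|<\kappa$ such that $\{W_{y}:y\in A\}$ still covers $Y$. I then claim that $\mathcal{V}=\{U^{y}_{i}:y\in A,\ 1\le i\le n(y)\}$ is the desired subcover: given any $(x,y')\in X\times Y$, choose $y\in A$ with $y'\in W_{y}$, so that $(x,y')\in X\times W_{y}\subset N_{y}$, and hence $(x,y')$ lies in one of $U^{y}_{1},\dots,U^{y}_{n(y)}\in\mathcal{V}$.

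Finally I would bound the cardinality of $\mathcal{V}$. Each $y\in A$ contributes only the finitely many sets $U^{y}_{1},\dots,U^{y}_{n(y)}$, so $|\mathcal{V}|\le\sum_{y\in A}n(y)$. If $A$ is infinite this sum equals $|A|<\kappa$; if $A$ is finite the sum is finite, and a finite cardinal is again $<\kappa$. In either case $|\mathcal{V}|<\kappa$, so $X\times Y$ is $\kappa$-Lindel\"{o}f. The only point requiring care is this final cardinal bookkeeping; all of the genuine topological content sits in the tube-lemma step.
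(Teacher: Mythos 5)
Your proof is correct: the paper states this lemma without proof (as ``well known''), and the tube-lemma argument you give is precisely the standard one, with the cardinal bookkeeping at the end handled properly. Nothing to add.
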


\begin{theorem}\label{t1}
If a rectifiable space $G$ contains an open $\sigma$-compact rectifiable subspace $H$, then $G$ admits a disjoint open cover refining the open cover $\{g\cdot H: g\in G\}$. Therefore, $G$ is a topological sum of $\sigma$-compact subspaces and hence $X$ is strongly paracompact.
\end{theorem}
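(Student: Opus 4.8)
The plan is to convert the cover $\{g\cdot H:g\in G\}$ into a clopen partition of $G$ whose members are $\sigma$-compact, and then to invoke two soft facts: every regular $\sigma$-compact space is strongly paracompact, and strong paracompactness is inherited by topological sums. To set things up, note that for each $g\in G$ the left translation $L_g=p(g,\cdot)$ is a homeomorphism of $G$ with inverse $q(g,\cdot)$, directly from the identities $p(x,q(x,y))=q(x,p(x,y))=y$. Since $H$ is an open rectifiable subspace it is clopen by Lemma~\ref{l1}, so each coset $g\cdot H=L_g(H)$ is clopen and $\sigma$-compact, and the relation $g=p(g,e)\in g\cdot H$ shows that $\{g\cdot H:g\in G\}$ really is an open cover.

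For the decomposition I would declare $x\sim y$ to mean that some clopen $\sigma$-compact subset of $G$ contains both $x$ and $y$. Reflexivity uses $x\in x\cdot H$, symmetry is clear, and transitivity holds because the union of two clopen $\sigma$-compact sets is again clopen and $\sigma$-compact. Each class $G_x$ is a union of clopen sets, hence open, and its complement is a union of other classes, hence open as well, so $G_x$ is clopen. Moreover $G_x$ is saturated for the cover: if $y\in G_x$ then $y\cdot H$ is a clopen $\sigma$-compact set containing $y$, so $y\cdot H\subseteq G_x$. Thus $\{G_x\}$ is a clopen partition of $G$ and every class is a union of cosets.

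The heart of the argument is to prove that every class $G_x$ is $\sigma$-compact. By homogeneity of $G$ it suffices to treat the class of $e$, and here I would combine the left-translation structure with Lemma~\ref{p0} (an open $\sigma$-compact set generates a $\sigma$-compact clopen rectifiable subspace, since each generating step is a continuous image of a finite power and there are only countably many steps) and Lemma~\ref{p1}. Concretely, the key sub-claim is that the union of all cosets meeting a fixed $\sigma$-compact set is again $\sigma$-compact; granting this, $G_e=\bigcup_n E_n$, where $E_0=H$ and $E_{n+1}$ is the union of the cosets meeting $E_n$, is a countable union of $\sigma$-compact sets. I expect this sub-claim to be the main obstacle: the rectifiable identities provide only left division, so right translations are not controlled, and what must be excluded is precisely an $\omega_1$-type unbounded increasing chain of clopen $\sigma$-compact sets inside a class. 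Ruling this out is exactly where the homogeneity of $G$ (equivalently, transitivity of the translations $L_g$) has to be exploited, and it is the step that distinguishes a rectifiable space from pathological examples such as $\omega_1$ with its clopen compact initial segments.

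Once each class $G_x$ is known to be $\sigma$-compact, hence Lindel\"of, the cover of $G_x$ by the cosets it contains admits a countable subcover $C_1,C_2,\dots$, and setting $W_n=C_n\setminus(C_1\cup\cdots\cup C_{n-1})$ produces pairwise disjoint clopen sets (finite Boolean combinations of clopen cosets) that cover $G_x$, each contained in a single coset. Ranging over all classes, the resulting family is a disjoint open cover of $G$ refining $\{g\cdot H:g\in G\}$, with every member clopen and $\sigma$-compact. A disjoint open cover is a clopen partition, so $G$ is the topological sum of these $\sigma$-compact pieces; since each is regular and $\sigma$-compact it is strongly paracompact, and strong paracompactness passes to topological sums, whence $G$ is strongly paracompact.
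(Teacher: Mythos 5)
Your overall strategy (a clopen decomposition into $\sigma$-compact pieces, then countable subcovers within each piece) is reasonable in outline, but the proposal has a genuine gap precisely at the step you yourself flag as ``the main obstacle.'' The central claim --- that the union of all cosets $g\cdot H$ meeting a fixed $\sigma$-compact set is again $\sigma$-compact --- is asserted, not proved, and it is exactly the point where the absence of right division in a rectifiable space bites. In a topological group one would write $\{g: (gH)\cap F\neq\emptyset\}\subset F\cdot H^{-1}$; here the identity $p(g,h)=f$ only yields $h=q(g,f)$, which constrains $h$ rather than $g$, so there is no visible compact bound on the set of translating elements. The paper never tries to bound such unions directly: it runs a transfinite induction on the Lindel\"of number $l(X)$ of rectifiable subspaces $X\supseteq H$, using Lemma~\ref{p0} to generate open (hence clopen, by Lemma~\ref{l1}) rectifiable subspaces $H_{\alpha}$ containing $H\cup\{x_{\beta}\}_{\beta<\alpha}$ with $l(H_{\alpha})<\kappa$ (via Lemma~\ref{p1}), applies the inductive hypothesis inside each $H_{\alpha}$, and pastes the resulting disjoint refinements together by subtracting the clopen sets $H^{<\alpha}=\bigcup_{\beta<\alpha}H_{\beta}$. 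Your plan is missing an argument of comparable substance at this point; ``exploit homogeneity'' is not one.

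There is a second, independent problem: the relation you define ($x\sim y$ iff some clopen $\sigma$-compact set contains both) is strictly coarser than the chain relation generated by the cover $\{g\cdot H: g\in G\}$, and its classes need not be $\sigma$-compact or even Lindel\"of. In an uncountable discrete rectifiable space (say a discrete group of cardinality $\omega_{1}$, with $H=\{e\}$) any two points lie in a common finite clopen set, so there is a single equivalence class, namely all of $G$, while $\bigcup_{n}E_{n}=\{e\}$. Hence your identity $G_{e}=\bigcup_{n}E_{n}$ fails, and with it the extraction of a countable subcover of $G_{x}$ by cosets. (The theorem itself is of course trivially true in that example; it is only your intermediate claim that breaks.) The repair is to define the classes as chain components of the cover --- finite chains of pairwise intersecting cosets --- but even after that repair you are still left with the unproved $\sigma$-compactness of the sets $E_{n}$, which is the heart of the matter and is what the paper's transfinite induction is built to avoid.
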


\begin{proof}
Let $l(G)=\gamma$. By induction of cardinals $\alpha\leq \gamma$ we will prove that each rectifiable subspace $X\subset G$ with $H\subset X$ and $l(X)\leq\alpha$ admits a disjoint open cover refining the open cover $\{x\cdot H: x\in X\}$.

Suppose that $X$ is a Lindel\"{o}f rectifiable subspace of $G$ such that $H\subset X$. Since $X$ is Lindel\"{o}f, there exists a sequence $\{x_{n}\}_{n\in\omega}\subset X$ such that $\{x_{n}\cdot H: n\in \omega\}$ is a countable subcover of the open cover $\{x\cdot H: x\in X\}$. For each $n\in\omega$, put
$$U_{n}=(x_{n}\cdot H)\setminus\bigcup_{k<n}x_{k}\cdot H. $$It follows from Lemma~\ref{l1} that $U_{n}$ is a clopen subset of $G$. Therefore, $\mathscr{U}=\{U_{n}: n\in \omega\}$ is a disjoint open cover of $X$ refining the cover $\{x\cdot H: x\in X\}$.

Now suppose that, for some uncountable cardinal $\kappa\leq\gamma$, we have proved that any rectifiable subspace $X\subset G$ with $H\subset X$ and $l(X)<\kappa$ admits a disjoint open cover refining the cover $\{x\cdot H: x\in X\}$. Next we will show that any rectifiable subspace $X$ with $H\subset X$ and $l(X)=\kappa$ admits a disjoint open cover refining the cover $\{x\cdot H: x\in X\}$.

Fix a rectifiable subspace $X\subset G$ with $H\subset X$ and $l(X)=\kappa$. Since $l(X)=\kappa$, we can take a subset $B\subset X$ such that the cardinality $|B|\leq\kappa$ and $X=\bigcup_{x\in B} x\cdot H$. Enumerate $B$ as $\{x_{\alpha}: \alpha<\kappa\}$. For each $\alpha<\kappa$, let $H_{\alpha}$ be the smallest rectifiable subspace of $G$ containing the set $B_{\alpha}=H\cup \{x_{\beta}\}_{\beta<\alpha}$. Since $H_{\alpha}\cdot H\subset H_{\alpha}$ and $H$ is open, the rectifiable subspace $H_{\alpha}$ is open in $G$. It follows from Lemma~\ref{l1} that $H_{\alpha}$ is closed in $G$. Since $H$ is $\sigma$-compact, it is easy to see that $l(H_{\alpha})\leq\alpha<\kappa$ by Lemma~\ref{p1} and the construction of $H_{\alpha}$ in Lemma~\ref{p0}. Hence by the inductive assumption, the rectifiable subspace $H_{\alpha}$ admits a disjoint open cover $\mathscr{U}_{\alpha}$ refining the open cover $\{x\cdot H: x\in H_{\alpha}\}$. Moreover, it easily checked that the union $H^{<\alpha}=\bigcup_{\beta<\alpha}H_{\beta}$ is an open rectifiable subspace of $G$. Then the rectifiable subspace $H^{<\alpha}$ is closed in $G$ by Lemma~\ref{l1}. Now, for each $\alpha<\kappa$, put $$\mathscr{V}_{\alpha}=\{U\setminus H^{<\alpha}: U\in \mathscr{U}_{\alpha}\}.$$ Obviously, each $\mathscr{V}_{\alpha}$ is a disjoint open cover of the space $H_{\alpha}\setminus H^{<\alpha}$ refining the cover $\mathscr{U}_{\alpha}$. Put $$\mathscr{V}=\bigcup_{\alpha<\kappa}\mathscr{V}_{\alpha}.$$ Then $\mathscr{V}$ is a disjoint open cover of $X$ and refines the open cover $\{x\cdot H: x\in X\}$ of $X$.

By induction, we can see that $G$ admits a disjoint open cover refining the open cover $\{g\cdot H: g\in G\}$. Since $H$ is open $\sigma$-compact, $G$ is a topological sum of $\sigma$-compact subspaces, then $X$ is strongly paracompact.
\end{proof}

\begin{corollary}
Let $G$ be a locally $\sigma$-compact rectifiable space. Then it is a paracompact space.
\end{corollary}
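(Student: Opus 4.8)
The plan is to reduce everything to Theorem~\ref{t1}: it suffices to produce a single open $\sigma$-compact rectifiable subspace $H\subseteq G$, after which paracompactness is immediate, since Theorem~\ref{t1} then yields that $G$ is strongly paracompact, and every strongly paracompact space is paracompact.

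First I would use local $\sigma$-compactness to fix an open set $U$ (say a neighborhood of the right neutral element $e$) whose closure $\overline{U}$ is $\sigma$-compact. Then I would apply Lemma~\ref{p0} with $A=\overline{U}$ to form the smallest rectifiable subspace $H$ containing $\overline{U}$, so that $H=\bigcup_{n\in\N}(A_{n}\cup B_{n})$ with the $A_{n},B_{n}$ obtained by finitely iterating $p$ and $q$ starting from $\overline{U}$. Since $\overline{U}$ is $\sigma$-compact, each finite power $\overline{U}\times\cdots\times\overline{U}$ is $\sigma$-compact, and because $p$ and $q$ are continuous, every $A_{n}$ and $B_{n}$ is a continuous image of a $\sigma$-compact space, hence $\sigma$-compact; as a countable union of $\sigma$-compact sets is $\sigma$-compact, $H$ is $\sigma$-compact.

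The step I expect to be the main obstacle is the \emph{openness} of $H$: Lemma~\ref{p0} guarantees openness only when the generating set is open, whereas our generating set $\overline{U}$ is $\sigma$-compact but typically not open. To handle this I would establish the auxiliary fact that \emph{any rectifiable subspace with nonempty interior is open}, and apply it to $H$, which contains the nonempty open set $U$. For the auxiliary fact, recall that for fixed $x$ the left translation $y\mapsto p(x,y)$ is a homeomorphism of $G$ with inverse $y\mapsto q(x,y)$, by the identities $p(x,q(x,y))=q(x,p(x,y))=y$ of Theorem~\ref{t4}. Given $w_{0}$ in the interior of $H$ together with an open set $W\subseteq H$ around it, and any $h\in H$, the composite map $f(y)=p(h,q(w_{0},y))$ is a homeomorphism sending $w_{0}$ to $p(h,q(w_{0},w_{0}))=p(h,e)=h$; moreover $f(W)\subseteq H$, since $q(w_{0},y)\in H$ and then $p(h,q(w_{0},y))\in H$ because $q(H,H)\subseteq H$ and $p(H,H)\subseteq H$. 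Thus $f(W)$ is an open neighborhood of $h$ contained in $H$, so every point of $H$ is interior and $H$ is open.

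Finally, with $H$ an open $\sigma$-compact rectifiable subspace of $G$, Theorem~\ref{t1} applies directly: $G$ admits a disjoint open cover refining $\{g\cdot H:g\in G\}$, so $G$ is a topological sum of $\sigma$-compact subspaces and is strongly paracompact, whence $G$ is paracompact.
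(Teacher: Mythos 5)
Your proof is correct and follows the same overall strategy as the paper: produce an open $\sigma$-compact rectifiable subspace $H$ of $G$ and invoke Theorem~\ref{t1}. The only real difference is how $H$ is built and verified. The paper takes $H$ to be the smallest rectifiable subspace generated by the open set $U$ itself, so openness (and hence closedness, via Lemma~\ref{l1}) is immediate from Lemma~\ref{p0}, while the $\sigma$-compactness of $H$ is dismissed as ``easy'' (it does hold, because this clopen $H$ sits inside the $\sigma$-compact rectifiable subspace generated by $\overline{U}$, and a closed subspace of a $\sigma$-compact space is $\sigma$-compact). You instead generate $H$ from $\overline{U}$, which makes $\sigma$-compactness immediate from the iterative construction, and you supply the openness through a correctly proved auxiliary fact --- a rectifiable subspace with nonempty interior is open --- using the homeomorphism $y\mapsto p(h,q(w_{0},y))$, which carries an interior point $w_{0}$ to an arbitrary $h\in H$ and keeps the image inside $H$. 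In effect you make explicit the detail the paper glosses over, at the cost of one extra (standard) translation lemma; either construction feeds into Theorem~\ref{t1} in exactly the same way.
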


\begin{proof}
Let $U$ be an open neighborhood of $e$ in $G$ such that the closure of $U$ is $\sigma$-compact. Let $H$ be the smallest rectifiable subspace of $G$ containing the set $U$. It is easy to see that $H$ is open in $G$, hence it is closed in $G$ by Lemma~\ref{l1}. Moreover, it is easy to obtain that $H$ is $\sigma$-compact by Lemma~\ref{p0}. Then, by Theorem~\ref{t1}, $G$ is paracompact.
\end{proof}

\begin{corollary}\label{c0}
Let $G$ be a locally compact rectifiable space. Then it is a paracompact space.
\end{corollary}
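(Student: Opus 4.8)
The plan is to reduce this statement to the immediately preceding corollary (the one asserting that every locally $\sigma$-compact rectifiable space is paracompact), so that essentially no new work is required. The key observation is that local compactness is a special case of local $\sigma$-compactness: every compact space is trivially $\sigma$-compact, being the union of the single compact set equal to itself (equivalently, of a constant sequence of compact sets indexed by $\mathbb{N}$). Hence, if $G$ is locally compact, then for each $x\in G$ one may choose a neighborhood $U$ of $x$ whose closure in $G$ is compact, and this same closure is then $\sigma$-compact. It follows directly from the definition that $G$ is locally $\sigma$-compact.

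Having made this reduction, I would simply invoke the preceding corollary to conclude that $G$ is paracompact, completing the argument in one line. I do not expect any genuine obstacle here: the entire substance of the result lives in Theorem~\ref{t1} and in the preceding corollary, whose proof constructs an open $\sigma$-compact rectifiable subspace $H$ from a neighborhood of $e$ via Lemma~\ref{p0} and then applies Theorem~\ref{t1}. The only point requiring any care is the trivial inclusion ``compact $\Rightarrow$ $\sigma$-compact,'' which should be stated explicitly for clarity rather than left implicit.

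If instead one wanted a fully self-contained proof rather than a citation, I would mirror the proof of the preceding corollary verbatim, taking $U$ to be a neighborhood of $e$ with \emph{compact} closure, letting $H$ be the smallest rectifiable subspace of $G$ containing $U$ (which is open by Lemma~\ref{p0} and hence clopen by Lemma~\ref{l1}), and observing that $H$ is $\sigma$-compact by the construction in Lemma~\ref{p0}; Theorem~\ref{t1} then yields paracompactness. This direct route offers no real simplification over the reduction, however, so I would prefer the short argument: local compactness implies local $\sigma$-compactness, and the preceding corollary finishes the proof.
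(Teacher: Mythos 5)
Your proposal is correct and matches the paper's (implicit) reasoning exactly: the paper states this as an immediate consequence of the preceding corollary on locally $\sigma$-compact rectifiable spaces, using precisely the observation that compact closures are in particular $\sigma$-compact. No further comment is needed.
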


\begin{corollary}\label{c1}
Each connected locally compact rectifiable space is $\sigma$-compact.
\end{corollary}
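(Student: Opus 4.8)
The plan is to manufacture a nonempty clopen $\sigma$-compact rectifiable subspace and then let connectedness force it to be all of $G$. Concretely, since $G$ is locally compact, I would first choose an open neighborhood $U$ of the right neutral element $e$ whose closure $\overline{U}$ is compact, and hence $\sigma$-compact. Let $H$ be the smallest rectifiable subspace of $G$ containing $U$, as furnished by Lemma~\ref{p0}. Because $U$ is open, that lemma gives that $H$ is open in $G$, and Lemma~\ref{l1} then shows that $H$ is closed in $G$. Thus $H$ is clopen, and it is nonempty since $e\in U\subset H$.

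The next step is to verify that $H$ is itself $\sigma$-compact (not merely that its closure is). Since $H$ is closed and contains $U$, it contains $\overline{U}$; conversely, as $H$ is the smallest rectifiable subspace containing $U$ and $U\subset\overline{U}\subset H$, the set $H$ coincides with the smallest rectifiable subspace containing $\overline{U}$. Feeding the compact set $\overline{U}$ into the construction of Lemma~\ref{p0} and using that the operations $p$ and $q$ are continuous, each set $A_{n}$ and $B_{n}$ is a continuous image of a finite product of $\sigma$-compact sets, and is therefore $\sigma$-compact; consequently $H=\bigcup_{n\in\mathbb{N}}(A_{n}\cup B_{n})$ is a countable union of $\sigma$-compact sets and hence $\sigma$-compact. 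This is exactly the computation underlying the preceding corollary.

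Finally, connectedness closes the argument: $H$ is a nonempty clopen subset of the connected space $G$, so $H=G$, and therefore $G$ is $\sigma$-compact.

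I do not expect a serious obstacle here. The only point that needs care is confirming $\sigma$-compactness of $H$ rather than only of $\overline{H}$, which is handled by identifying $H$ with the rectifiable hull of the compact set $\overline{U}$ together with the continuity of $p,q$ and the countable construction in Lemma~\ref{p0}. The genuinely new ingredient beyond the earlier corollaries is merely the use of connectedness to upgrade the conclusion from ``a topological sum of $\sigma$-compact pieces'' to a single $\sigma$-compact space.
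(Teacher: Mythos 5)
Your proof is correct and follows essentially the route the paper intends: build the clopen $\sigma$-compact rectifiable hull $H$ of a neighborhood of $e$ with compact closure via Lemma~\ref{p0} and Lemma~\ref{l1}, and let connectedness force $H=G$. The only (harmless) difference is that you bypass the topological-sum decomposition of Theorem~\ref{t1} entirely, and your explicit care in passing from $U$ to $\overline{U}$ to justify the $\sigma$-compactness of $H$ itself is a point the paper glosses over.
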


\begin{corollary}\label{c2}
Each locally compact rectifiable space is normal.
\end{corollary}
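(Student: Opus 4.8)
The plan is to obtain normality as an immediate consequence of the paracompactness already established, so essentially no new combinatorial work on the rectifiable structure of $G$ is required. First I would invoke Corollary~\ref{c0}, which tells us that the locally compact rectifiable space $G$ is paracompact. Since every space considered in this paper is assumed Hausdorff, $G$ is in fact a paracompact Hausdorff space, and this is precisely the hypothesis one needs.

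It then suffices to apply the classical theorem of Dieudonn\'{e} that every paracompact Hausdorff space is normal. If one wished to keep the argument self-contained rather than merely citing it, one would recall its proof in two stages. In the first stage one shows that $G$ is regular: given a point $x\in G$ and a closed set $F$ with $x\notin F$, Hausdorffness supplies, for each $y\in F$, disjoint open sets separating $x$ and $y$; covering $F$ together with the open set $G\setminus F$ and passing to a locally finite open refinement, one collects the refinement members meeting $F$ and exploits local finiteness of their closures to assemble disjoint open neighbourhoods of $x$ and of $F$. In the second stage one upgrades regularity to normality by the same device, now separating two disjoint closed sets by means of a locally finite open refinement of a suitably chosen cover.

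The main point, and the only one requiring any attention, is organizational rather than mathematical: the statement follows at once from Corollary~\ref{c0} combined with this standard theorem, and the blanket Hausdorff convention of the paper is exactly what converts the previously proved paracompactness into normality. I therefore do not expect any genuine obstacle in carrying out this corollary.
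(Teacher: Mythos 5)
Your proposal is correct and matches the paper's intent exactly: the paper states this as an immediate corollary of Corollary~\ref{c0} (paracompactness) together with the standard fact that paracompact Hausdorff spaces are normal, which is precisely your argument. No gap.
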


\begin{corollary}\label{c3}
Each locally compact rectifiable space is a $D$-space.
\end{corollary}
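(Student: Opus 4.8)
The plan is to deduce the corollary at once from the results already in hand. By Corollary~\ref{c0} the space $G$ is paracompact, so in view of the remark recorded in the introduction---that a paracompact $p$-space is a $D$-space---it suffices to show that $G$ is a $p$-space. In this way the whole argument reduces to the single implication that a locally compact Hausdorff rectifiable space is a $p$-space; once this is established, $G$ is a paracompact $p$-space and is therefore a $D$-space.

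To verify that $G$ is a $p$-space I would pass through \v{C}ech-completeness. Every locally compact Hausdorff space is \v{C}ech-complete: for instance, the constant sequence each of whose terms is the cover of $G$ by open sets with compact closure is a complete sequence of open covers, which is one of the standard characterizations of \v{C}ech-completeness (equivalently, $G$ is then $G_{\delta}$ in $\beta G$). Since every \v{C}ech-complete space is a $p$-space, $G$ is a $p$-space, and this finishes the main route.

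A more self-contained alternative bypasses $p$-spaces and uses instead the structure theorem already proved. A locally compact space is a fortiori locally $\sigma$-compact, so, exactly as in the proof of the corollary preceding Corollary~\ref{c0}, one fixes an open neighbourhood $U$ of $e$ with $\sigma$-compact closure, takes $H$ to be the smallest rectifiable subspace containing $U$ (open and $\sigma$-compact by Lemma~\ref{p0}), and applies Theorem~\ref{t1} to present $G$ as a topological sum of clopen $\sigma$-compact subspaces. One then invokes two classical facts about $D$-spaces: every Lindel\"{o}f space (in particular every $\sigma$-compact space) is a $D$-space, and a free topological sum of $D$-spaces is a $D$-space. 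For the second, given a neighbourhood assignment on $G$ one restricts it to each clopen summand, uses the $D$-property of that summand to extract a closed discrete set whose assigned neighbourhoods cover the summand, and forms the union of these sets over all summands; since the summands are pairwise disjoint and clopen, this union is closed, discrete, and its assigned neighbourhoods cover $G$.

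Neither route presents a genuine obstacle, as both rest on classical theorems; the one point deserving care is the fact that a Lindel\"{o}f space is a $D$-space, where the naive countable subcover of $\{N(x):x\in G\}$ need not be indexed by a closed discrete set and must be thinned by a countable recursion. Since in our situation each summand is moreover locally compact, one may simply exhaust it by compacta $K_{n}$ with $K_{n}\subset\mathrm{int}\,K_{n+1}$ and select finitely many centres at each stage, which renders both the covering property and the closed-discreteness of the resulting set transparent. Thus the real content of Corollary~\ref{c3} lies entirely in Corollary~\ref{c0} and Theorem~\ref{t1}.
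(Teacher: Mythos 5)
Your main route is exactly the paper's: Corollary~\ref{c0} gives paracompactness, local compactness gives \v{C}ech-completeness and hence the $p$-space property, and the fact quoted in the paper that a paracompact $p$-space is a $D$-space finishes the argument; the paper supplies no further detail, so your first paragraph is the intended proof. One correction to your alternative route: the assertion that every Lindel\"{o}f space is a $D$-space is not a classical theorem but a well-known open problem (going back to van Douwen), and the ``countable recursion'' you allude to does not work for general Lindel\"{o}f spaces. What is true, and what you actually need, is that every $\sigma$-compact space is a $D$-space (cover $K_{1}$ by finitely many assigned neighbourhoods, pass to the closed set not yet covered, intersect with $K_{2}$, and so on); since each summand produced by Theorem~\ref{t1} is $\sigma$-compact, the alternative argument survives once that claim is weakened, and your locally compact exhaustion $K_{n}\subset\mathrm{int}\,K_{n+1}$ is a clean way to run it.
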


\section{Zero-dimensional rectifiable spaces}
In this section, we shall discuss rectifiable spaces with a $bc$-base. We mainly prove that for each rectifiable space with a $bc$-base $G$, either $G$ is locally compact or every compact subspace of $G$ is zero-dimensional.

We will call a base $\mathscr{B}$ of a space $X$ a {\it $bc$-base} if the boundary $B(U)=\overline{U}\setminus U$ of every
member $U$ of $\mathscr{B}$ is compact.

In \cite{A2014}, A.V. Arhangel'skii and  J. van Mill proved the following theorem:

\begin{theorem}\cite{A2014}
Every $\sigma$-compact non-locally compact topological group with a $bc$-base is
zero-dimensional.
\end{theorem}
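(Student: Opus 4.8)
The plan is to prove that $G$ is zero-dimensional by showing that $G$ has a base of clopen sets, i.e. $\operatorname{ind} G = 0$. Since $G$ is homogeneous, it suffices to produce, for every open neighbourhood $O$ of the neutral element $e$, a clopen set $V$ with $e \in V \subseteq O$. Fix such an $O$. Using the $bc$-base I first choose an open set $U$ with $e \in U \subseteq \overline{U} \subseteq O$ whose boundary $F = \overline{U}\setminus U$ is compact. The whole problem is thereby reduced to dissolving the single compact boundary $F$: I must manufacture from $U$ a genuinely clopen neighbourhood of $e$ sitting inside $U$.

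The two hypotheses enter as follows. Because $G$ is homogeneous and \emph{not} locally compact, no nonempty open set has compact closure; consequently every compact subset of $G$ --- in particular $F$ and the compact set $FF^{-1}$ --- has empty interior. From this I extract the sliding principle used repeatedly below: for every compact $C \subseteq G$ and every nonempty open $W$ there is $g \in W$ with $g \notin C$, and more generally a prescribed compact set can be translated off any given compact set while one stays inside a prescribed neighbourhood, because the set of ``forbidden'' translations is compact and hence has empty interior. Because $G$ is $\sigma$-compact I also fix an increasing compact exhaustion $G = \bigcup_{n} K_{n}$. The point of the exhaustion is that an open set $V$ is closed precisely when $\partial V$ meets none of the $K_{n}$, so the target becomes: build an open $V$ with $e \in V \subseteq U$ and $\partial V \cap K_{n} = \emptyset$ for every $n$.

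I would construct $V$ together with its complement by a recursion along the exhaustion, a ``painting'' argument. At stage $n$ I decide the status of the newly captured compact piece $K_{n}$ relative to $V$, covering the relevant part of it by finitely many small members of the $bc$-base and adjoining or deleting their closures, arranging that after stage $n$ the part declared ``inside'' and the part declared ``outside'' are open, have disjoint closures, and leave between them only a compact residual boundary. The sliding principle is then applied to push this compact residue, at each step, out of $K_{n}$ and into the still-undecided region, which is possible exactly because each residual boundary is compact (by $bc$-ness) and translatable off the already-decided compact part (by non-local compactness). Taking $V$ to be the increasing union of the ``inside'' open sets, every point of $G$ is eventually decided, while any point of $\partial V$ would have to persist in every $K_{n}$; since the construction forbids this, $\partial V = \emptyset$ and $V$ is the required clopen neighbourhood.

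The main obstacle is precisely this inductive boundary-clearing step: keeping the ``inside'' and ``outside'' open parts with disjoint closures while their common compact residual boundary is evicted from each successive $K_{n}$, done coherently so that no point of $G$ remains on the boundary forever. This is where both hypotheses are indispensable --- the $bc$-base keeps every residual boundary compact so that the sliding principle even applies, and non-local compactness supplies the room (empty interior of compacta) needed to translate that compact residue off the current stage. Once this bookkeeping is in place, $\sigma$-compactness closes the argument, since only countably many stages are needed and $\bigcup_{n} K_{n} = G$ accounts for every potential boundary point.
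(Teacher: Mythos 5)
Your reduction steps are sound: homogeneity reduces the problem to finding small clopen neighbourhoods of $e$; non-local compactness plus homogeneity does imply that every compact subset of $G$ has empty interior, so the ``sliding principle'' (the set $F^{-1}K$ of forbidden translators is compact, hence any neighbourhood of $e$ contains a good translator) is correct and is in fact exactly Claim~1 in the proof of Theorem~\ref{t000}. But the proof as written has a genuine gap at the step you yourself flag as ``the main obstacle'': the inductive boundary-clearing is described only in outline and never actually carried out. Two concrete problems are left unresolved. First, ``pushing the compact residual boundary out of $K_n$'' is not a well-defined operation: the sliding principle lets you translate a fixed compact set off another compact set, but here the object to be moved is the frontier of a partially constructed set $V_n$ that must simultaneously keep containing $e$, stay inside $U$, and extend the earlier stages; replacing $V_n$ by a translate $V_n\cdot g$ destroys all of these constraints, and no other mechanism for ``evicting'' the residue is given. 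Second, the extension step itself --- given disjoint open $V_n, W_n$ with $V_n\cup W_n\supseteq K_n$, produce disjoint open $V_{n+1}\supseteq V_n$, $W_{n+1}\supseteq W_n$ with $V_{n+1}\cup W_{n+1}\supseteq K_{n+1}$ --- is precisely where the whole difficulty of the theorem lives, and asserting that ``once this bookkeeping is in place'' the argument closes is not a proof of that step. As it stands the proposal is a plausible strategy, not a proof.

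For comparison, the route taken here (following \cite{A2014}, and generalized in Theorem~\ref{t000}) avoids constructing clopen sets directly. One first reduces, via the Countable Closed Sum Theorem, to showing that every \emph{compact} subset $F$ of $G$ is zero-dimensional; then one proves $F$ is totally disconnected by exhibiting, for any compact $K$ and any two points of $F$ with prescribed neighbourhoods, a continuous map $h\colon F\to G$ (a right translation $f\mapsto f\cdot w$) that keeps the two points near where they were but moves $h(F)$ entirely off $K$ --- this is where your sliding principle is actually used, together with \cite[Proposition~3.4]{A2014}; finally, a totally disconnected compact space is zero-dimensional. This decomposition confines all the delicate work to compacta, where ``totally disconnected $\Rightarrow$ zero-dimensional'' is available for free, which is exactly the tool your direct construction lacks. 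If you want to salvage your approach, you would essentially have to prove the extension lemma for the painting step, and that lemma is at least as hard as the theorem itself.
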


Next, we will improve this theorem by replacing ``topological group'' with ``rectifiable space'', see Corollary~\ref{c4}.

We say that a space $X$ is {\it separated by a compact subset $F$ of $X$ between points $x$ and $y$} \cite{A2014}
of $X$ if there are disjoint open subsets $U$ and $V$ such that $x\in U, y\in V$ , and $U\cup V=X\setminus F$.
A space $X$ is {\it separated by compacta} \cite{A2014} if for any two distinct points $x, y\in X$, the space $X$ is
separated between $x$ and $y$ by some compact subspace of $X$.

A topological space $X$ is {\it totally disconnected} if all connected components in $X$ are the one-point sets.

The following proposition is well known.

\begin{proposition}\label{p2}
A totally disconnected locally compact space is zero-dimensional.
\end{proposition}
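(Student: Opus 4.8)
The plan is to show directly that $X$ has a base of clopen sets; since $X$ is Hausdorff this is precisely the condition $\operatorname{ind} X = 0$ that we call zero-dimensionality. So I would fix a point $x \in X$ together with an open neighbourhood $U$ of $x$, and aim to produce a clopen set $W$ with $x \in W \subseteq U$. The first move is to use local compactness: choose an open $V$ with $x \in V$, $\overline{V}$ compact and $\overline{V} \subseteq U$, and then work entirely inside the compact Hausdorff space $K = \overline{V}$, whose boundary $B = \overline{V}\setminus V$ is a compact set not containing $x$.

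The heart of the argument is the classical fact that in a compact Hausdorff space the connected component of a point coincides with its quasi-component, that is, with the intersection of all clopen subsets containing the point. I would first invoke (or prove) this. Granting it, I observe that the component of $x$ in $K$ is contained in its component in $X$, which is $\{x\}$ because $X$ is totally disconnected; hence the component, and therefore the quasi-component, of $x$ in $K$ equals $\{x\}$. Consequently $\bigcap\{C : C \text{ clopen in } K,\ x\in C\} = \{x\}$, so this family of clopen sets separates $x$ from every other point of $K$, in particular from every point of the compact set $B$.

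Next I would collapse this separation into a single clopen set. For each $b \in B$ pick a clopen $C_b \subseteq K$ with $x \in C_b$ and $b \notin C_b$; the open sets $K \setminus C_b$ cover the compact $B$, so finitely many suffice, and the intersection $C$ of the corresponding $C_b$ is a clopen subset of $K$ containing $x$ and disjoint from $B$. Since $C \subseteq \overline{V} = V \cup B$ and $C \cap B = \emptyset$, we get $C \subseteq V$. Finally I would check that $C$ is clopen not merely in $K$ but in $X$: it is closed in $X$ because $K$ is closed in $X$, and it is open in $X$ because $C$ is open in $K$ and $C \subseteq V$ with $V$ open in $X$. Thus $W = C$ is a clopen neighbourhood of $x$ with $W \subseteq V \subseteq U$, which finishes the proof.

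I expect the genuine obstacle to be the components-equal-quasi-components lemma for compact Hausdorff spaces; everything after it is a routine compactness-and-separation argument. Proving that lemma amounts to showing the quasi-component $Q(x)$ is connected: if $Q(x) = F \sqcup F'$ with $F, F'$ disjoint closed sets and $x \in F$, one uses normality of the compact Hausdorff space to engulf them in disjoint open sets and then a finite-intersection compactness argument to trim a clopen set down to one lying on the $F$-side, contradicting the minimality built into $Q(x)$. This is where compactness and the Hausdorff (hence normal) hypotheses are essential, and it is the only non-formal ingredient of the whole proposition.
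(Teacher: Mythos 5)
Your proof is correct. The paper gives no argument for this proposition at all --- it is simply labelled ``well known'' --- so there is nothing to compare against; your route (shrink to a compact Hausdorff neighbourhood $K=\overline{V}$, invoke the coincidence of components and quasi-components in compact Hausdorff spaces, then take a finite intersection of clopen sets missing the boundary $B$) is the standard textbook argument, and every step checks out, including the final verification that $C$ is open in $X$ because $C\subseteq V$ with $V$ open. The only ingredient you defer, the quasi-component lemma, is correctly identified and your sketch of its proof via normality and the finite intersection property is the usual one.
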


\begin{theorem}\label{t000}
Suppose that $G$ is a rectifiable space such that any
two distinct points of $G$ can be separated by a compactum. Then either $G$ is locally compact or every $\sigma$-compact subspace
of $G$ is zero-dimensional.
\end{theorem}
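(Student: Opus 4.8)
The plan is to prove the contrapositive of the dichotomy: assuming $G$ is \emph{not} locally compact, I would show that every $\sigma$-compact subspace of $G$ is zero-dimensional. The first observation is that, since every rectifiable space is homogeneous, the failure of local compactness is uniform across $G$; consequently no compact subset of $G$ can have nonempty interior, for a compact set with an interior point would provide a compact neighborhood there and hence, by homogeneity, everywhere. Thus every compact subset of $G$ is closed with empty interior, i.e.\ nowhere dense. I will also use the standard fact that rectifiable spaces are Tychonoff, so that any $\sigma$-compact subspace, being regular and Lindel\"of, is normal.

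Next I would reduce the problem to compact subspaces. Write a given $\sigma$-compact subspace as $Y=\bigcup_{n\in\omega}K_{n}$ with each $K_{n}$ compact, hence closed in the normal space $Y$. By the countable closed sum theorem for the covering dimension, $Y$ is zero-dimensional as soon as each $K_{n}$ is. Each $K_{n}$ is compact and therefore locally compact, so Proposition~\ref{p2} reduces its zero-dimensionality to total disconnectedness. Hence it suffices to prove that \emph{every compact subspace $K$ of $G$ is totally disconnected}. (If it is convenient to let the rectifiable operations act inside the subspace, one may first replace $Y$ by the smallest rectifiable subspace it generates, which is again $\sigma$-compact by Lemma~\ref{p0}, since zero-dimensionality passes to subspaces.)

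For the main step I would argue by contradiction: suppose some compact $K$ has a nondegenerate component $C$, i.e.\ a compact connected set containing two distinct points $a,b$. Using the hypothesis, separate $a$ and $b$ in $G$ by a compactum $F$, so that $G\setminus F=U\sqcup V$ with $a\in U$, $b\in V$, and $U,V$ disjoint open. Since $C\cap U$ and $C\cap V$ are nonempty and relatively open in $C$, connectedness forces $C$ to meet $F$, while $C\not\subseteq F$ because $a\notin F$; thus the open-in-$C$ set $C\cap U$ has boundary contained in the compact nowhere-dense set $C\cap F$. The goal is then to convert this configuration into a contradiction by combining the rectifiable structure with non-local-compactness: translating the compact separator by a small neighborhood of $e$ through the multiplication $p$ and using the compactness of $F$, one tries to show that the persistence of a component straddling the nowhere-dense separator would either cover a neighborhood of a point by a compact set, forcing $G$ to be locally compact, or else produce a clopen subset of $K$ separating $a$ from $b$, contradicting that they lie in one component.

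The hard part will be precisely this last conversion. Separation by compacta alone does not force zero-dimensionality: the arc $[0,1]$ and the circle $S^{1}$ are separated by compacta yet are one-dimensional, the point being that they are locally compact. So the argument must use non-local-compactness in an essential way, and the delicate issue is to turn the compactness and nowhere-denseness of the separating set $C\cap F$, via the translations afforded by $p$ and $q$, into either the desired clopen separation or the contradictory compact neighborhood. Once every compact subspace is shown to be totally disconnected, the sum-theorem reduction of the second paragraph completes the proof that every $\sigma$-compact subspace of $G$ is zero-dimensional.
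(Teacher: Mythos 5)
Your reduction is the same as the paper's: by the countable closed sum theorem and Proposition~\ref{p2}, it suffices to show every compact subspace $F$ of a non-locally-compact $G$ is totally disconnected, and you set up the right configuration (a nondegenerate component containing $a,b$, a compact separator $P$ with $G\setminus P=U\sqcup V$). But the proof stops exactly where the real work begins: you write that ``one tries to'' convert this into a contradiction via small translations, and you correctly flag this as ``the hard part,'' without supplying the mechanism. That mechanism is the entire content of the theorem, so as it stands this is a genuine gap, not a complete proof.

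The missing idea is the following use of non-local-compactness (Claim~1 in the paper's proof): for any compact $K\subset G$ and any neighborhood $W$ of $e$, there is $c\in W$ with $(F\cdot c)\cap K=\emptyset$. Indeed, if every $c\in W$ satisfied $(F\cdot c)\cap K\neq\emptyset$, then each such $c$ would equal $q(f,k)$ for some $f\in F$, $k\in K$, so $W\subset q(F,K)$, a compact set; this would make $G$ locally compact, contrary to assumption. Note the translation goes the other way from what you propose: one translates the compact set $F$ itself off the separator $P$, not the separator. Choosing $W$ small enough that $a\cdot W\subset U$ and $b\cdot W\subset V$, and taking $c\in W$ with $(F\cdot c)\cap P=\emptyset$, the map $h(f)=f\cdot c$ sends the component of $a$ and $b$ to a connected set inside $G\setminus P=U\sqcup V$ meeting both $U$ and $V$ --- a contradiction. (This last step is \cite[Proposition 3.4]{A2014}, which the paper cites.) Your worry that ``separation by compacta alone does not force zero-dimensionality'' is well placed, and the $q(F,K)\supset W$ observation is precisely how non-local-compactness enters; without it the argument cannot be closed.
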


\begin{proof}
Suppose that $G$ is not locally compact. By the Countable Closed Sum Theorem of zero-dimensional spaces, it suffices to show that each compact subset of $G$ is zero-dimensional.

Take any compact subset $F$ of $G$. We will show that $F$ is zero-dimensional.
\smallskip

{\bf Claim 1:} For any compact subset $K$ of $G$ and any open neighborhood $U$ of $e$, there exists a point $c\in U$ such that $(F\cdot c)\cap K=\emptyset$.
\smallskip

Assume on the contrary that we have $(F\cdot c)\cap K\neq\emptyset$ for each $c\in U$. Hence there exist points $f\in G$ and $k\in K$ such that $fc=k$, and then $$c=q(f, p(f, c))=q(f, k)\in q(F, K).$$
Thus we have $U\subset q(F, K)$. Since $q$ is continuous, $q(F, K)$ is compact, hence $G$ is locally compact, which is a contradiction with our assumption.
\smallskip

{\bf Claim 2:} For any compact subset $K$ of $G$, any $x, y\in F$ and any open neighborhoods $U$ and $V$ of $x$ and $y$ in $G$, respectively, there exists a continuous mapping $h: F\rightarrow G$ such that $h(x)\in U$, $h(y)\in V$ and $K\cap h(F)=\emptyset$.
\smallskip

Indeed, since $U$ and $V$ are open neighborhoods of $x$ and $y$ in $G$ respectively, there exists an open neighborhood $W$ of $e$ in $G$ such that $$x\cdot W\subset U\ \mbox{and}\ y\cdot W\subset V.$$ By the Claim 1, there exists a $w\in W$ such that $(F\cdot w)\cap K=\emptyset$. Let $h: F\rightarrow G$ with $f\mapsto f\cdot w$ for each $f\in F$. Obviously, $h$ is continuous. Moreover, we have $$h(x)=x\cdot w\in x\cdot W\subset U\ \mbox{and}\ h(y)=y\cdot w\in y\cdot W\subset V.$$

From the Claim 2 and \cite[Proposition 3.4]{A2014}, $F$ is totally disconnected. Since $F$ is compact, it follows from Proposition~\ref{p2} that $F$ is zero-dimensional.
\end{proof}

By Theorem~\ref{t000}, the following two corollaries are obvious.

\begin{corollary}\label{c5}
Suppose that $G$ is a rectifiable space with a $bc$-base.
Then $G$ is locally compact or every compact subspace of $G$ is zero-dimensional.
\end{corollary}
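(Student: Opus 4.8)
The plan is to deduce this statement directly from Theorem~\ref{t000}. The conclusion of that theorem is the dichotomy ``$G$ is locally compact or every $\sigma$-compact subspace of $G$ is zero-dimensional'', and since every compact subspace is in particular $\sigma$-compact, the second alternative immediately specializes to ``every compact subspace of $G$ is zero-dimensional''. Hence the entire task reduces to verifying the single hypothesis of Theorem~\ref{t000}: that any two distinct points of $G$ can be separated by a compactum.

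To establish this separation property, I would fix distinct points $x, y \in G$ and use Hausdorffness to choose disjoint open sets $O_{1} \ni x$ and $O_{2} \ni y$. Because $\mathscr{B}$ is a base, I can pick $W \in \mathscr{B}$ with $x \in W \subseteq O_{1}$. Since $W$ is disjoint from the open set $O_{2}$, no point of $O_{2}$ can lie in $\overline{W}$; in particular $y \notin \overline{W}$. At this point the $bc$-base hypothesis supplies exactly what is needed: the boundary $F = B(W) = \overline{W} \setminus W$ is compact. Setting $U = W$ and $V = G \setminus \overline{W}$, these are disjoint open sets with $x \in U$ and $y \in V$, and they satisfy $U \cup V = G \setminus (\overline{W} \setminus W) = G \setminus F$. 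This exhibits the compactum $F$ as separating $x$ and $y$, so $G$ is separated by compacta and Theorem~\ref{t000} applies, completing the argument.

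The argument has essentially no serious obstacle once one identifies the correct basic open set to use; the crux is simply recognizing that the compact boundary guaranteed by the $bc$-base is itself the separating set. The only point requiring a moment's care is the claim $y \notin \overline{W}$, and I would emphasize that this needs \emph{no} regularity assumption on $G$: it follows purely from the fact that $O_{2}$ is an open neighborhood of $y$ disjoint from $W$, so that $y$ cannot be a limit point of $W$. Compactness of the separating set is then handed over for free by the defining property of a $bc$-base, which is what makes the reduction to Theorem~\ref{t000} immediate.
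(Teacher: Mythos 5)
Your proposal is correct and matches the intended argument: the paper derives Corollary~\ref{c5} directly from Theorem~\ref{t000} (calling it ``obvious''), and the only content is exactly the verification you supply, namely that a Hausdorff space with a $bc$-base is separated by compacta via the compact boundary of a basic neighborhood. Your computation $W\cup(G\setminus\overline{W})=G\setminus(\overline{W}\setminus W)$ and the observation that $y\notin\overline{W}$ needs only Hausdorffness are both sound, so nothing is missing.
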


\begin{corollary}
If $G$ is any non-locally compact rectifiable space with a $bc$-base, then
$\mbox{ind}(G) \leq 1$.
\end{corollary}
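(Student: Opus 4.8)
The plan is to unwind the definition of the small inductive dimension and feed it the conclusion of Corollary~\ref{c5}. Recall that $\mathrm{ind}(G)\leq 1$ means precisely that for every point $x\in G$ and every open neighborhood $O$ of $x$ there is an open set $V$ with $x\in V\subset O$ whose boundary $B(V)=\overline{V}\setminus V$ satisfies $\mathrm{ind}(B(V))\leq 0$. So it suffices to produce, for each such pair $(x,O)$, a neighborhood of $x$ inside $O$ whose boundary is zero-dimensional.

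First I would exploit the hypothesis that $G$ has a $bc$-base $\mathscr{B}$: given $x$ and $O$, choose a basic member $V\in\mathscr{B}$ with $x\in V\subset O$. By the very definition of a $bc$-base, the boundary $B(V)$ is compact. This is the only place the $bc$-base is used, and it is exactly what delivers a compact boundary on demand.

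Next I would invoke Corollary~\ref{c5}. Since $G$ is assumed to be non-locally compact, that corollary guarantees that every compact subspace of $G$ is zero-dimensional; in particular the compact set $B(V)$ is zero-dimensional, i.e.\ $\mathrm{ind}(B(V))\leq 0$. Hence $V$ is the required neighborhood, and as $x$ and $O$ were arbitrary we conclude $\mathrm{ind}(G)\leq 1$.

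I do not anticipate any genuine obstacle here, since all the substantive content — that compact subsets are totally disconnected, and therefore (being compact) zero-dimensional via Proposition~\ref{p2} — was already established in Theorem~\ref{t000} and packaged into Corollary~\ref{c5}. The only minor point to keep in mind is the degenerate case where $B(V)$ is empty, in which $\mathrm{ind}(B(V))=-1\leq 0$ and the estimate still holds. Thus the corollary follows immediately by pairing the $bc$-base with Corollary~\ref{c5}.
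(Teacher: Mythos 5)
Your argument is correct and is exactly the intended one: the paper leaves this corollary as an immediate consequence of Corollary~\ref{c5}, and the route you take --- pick a basic neighborhood from the $bc$-base so that its boundary is compact, then apply Corollary~\ref{c5} (using non-local compactness) to conclude that boundary is zero-dimensional, hence $\mathrm{ind}(G)\leq 1$ --- is the standard unwinding of the definition that the authors had in mind. Your remark about the empty-boundary case is a harmless but welcome bit of care.
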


\begin{corollary}\label{c4}
Every locally $\sigma$-compact rectifiable space with a $bc$-base is
locally compact or zero-dimensional.
\end{corollary}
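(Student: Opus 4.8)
The plan is to combine Theorem~\ref{t000} with the topological-sum decomposition of Theorem~\ref{t1}. Let $G$ be a locally $\sigma$-compact rectifiable space with a $bc$-base $\mathscr{B}$. If $G$ is locally compact we are in the first alternative and there is nothing to prove, so I would assume $G$ is not locally compact and aim to show that $G$ is zero-dimensional. First I would verify that a $bc$-base forces any two distinct points to be separated by a compactum, so that Theorem~\ref{t000} is applicable: given distinct $x, y \in G$, Hausdorffness and the base property yield $U \in \mathscr{B}$ with $x \in U$ and $y \notin \overline{U}$, and then $U$ together with $G \setminus \overline{U}$ are disjoint open sets containing $x$ and $y$ whose union is $G \setminus B(U)$, where $B(U) = \overline{U} \setminus U$ is compact by the definition of a $bc$-base. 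Hence every pair of distinct points is separated by a compactum, and since $G$ is not locally compact, Theorem~\ref{t000} tells us that every $\sigma$-compact subspace of $G$ is zero-dimensional.

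Next I would realize $G$ as a disjoint clopen union of $\sigma$-compact pieces. Exactly as in the argument that a locally $\sigma$-compact rectifiable space is paracompact, local $\sigma$-compactness provides an open neighborhood $U$ of $e$ with $\sigma$-compact closure; if $H$ denotes the smallest rectifiable subspace containing $U$, then Lemma~\ref{p0} shows that $H$ is open and $\sigma$-compact and Lemma~\ref{l1} shows that $H$ is clopen. Thus $G$ contains an open $\sigma$-compact rectifiable subspace, so Theorem~\ref{t1} writes $G = \bigoplus_{\alpha} S_{\alpha}$ as a topological sum of $\sigma$-compact subspaces. Here each summand $S_{\alpha}$ is clopen in $G$, since the members of a disjoint open cover are automatically closed, and each $S_{\alpha}$ is $\sigma$-compact, being a closed subset of some translate $g \cdot H$ that is itself $\sigma$-compact as the continuous image of $H$.

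Finally I would assemble the result. Each $S_{\alpha}$ is a $\sigma$-compact subspace of $G$, hence zero-dimensional by the first paragraph, so $S_{\alpha}$ has a base consisting of sets clopen in $S_{\alpha}$; because $S_{\alpha}$ is itself clopen in $G$, these sets are clopen in $G$, and their union over all $\alpha$ is a base of clopen sets for $G$. Therefore $\mbox{ind}(G) = 0$ and $G$ is zero-dimensional, which finishes the proof. The one step that I expect to require genuine care is this final global passage: knowing merely that every $\sigma$-compact subspace is zero-dimensional does not by itself make $G$ zero-dimensional, and for an arbitrary open cover by zero-dimensional subspaces the property would not propagate. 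It is precisely the clopen nature of the summands supplied by Theorem~\ref{t1} that trivializes the sum theorem for $\mbox{ind}$ here, so the main obstacle is really in securing that clopen decomposition rather than in the dimension-theoretic bookkeeping.
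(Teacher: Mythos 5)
Your proof is correct and follows essentially the same route as the paper: both obtain a clopen $\sigma$-compact rectifiable subspace via Lemmas~\ref{p0} and \ref{l1} and then apply the separation-by-compacta result (Theorem~\ref{t000}, via Corollary~\ref{c5}) to $\sigma$-compact pieces. The paper states the reduction to the $\sigma$-compact case as a ``without loss of generality,'' whereas you justify it explicitly through the topological-sum decomposition of Theorem~\ref{t1} and also verify that a $bc$-base yields separation by compacta --- details the paper treats as obvious.
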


\begin{proof}
 By Lemma~\ref{p0} and Lemma~\ref{l1}, it is easy to see that $G$ has a $\sigma$-compact clopen rectifiable subspace. Obviously, it suffices to prove that $H$ is zero-dimensional. Therefore, without loss of generality, we may assume that $G$ is $\sigma$-compact. Finally, we only need to apply Corollary~\ref{c5}.
\end{proof}

Moreover, We do not know whether Corollary~\ref{c4} can be extended in the class of paratopological groups. Therefore, we have the following question.

\begin{question}
Is every locally $\sigma$-compact non-locally compact paratopological group with a $bc$-base
zero-dimensional?
\end{question}

It is well known that a totally disconnected locally compact topological group has an open compact subgroup. Therefore, we have the following question.

\begin{question}\label{q3}
Does each totally disconnected locally compact rectifiable space have an open compact rectifiable subspace?
\end{question}

Approximately 10 years ago, L.G. Zambakhidze asked whether every non-zero-dimensional topological group with a $bc$-base is locally compact. Recently, some progress has been made on this problem in \cite{A2014}. It is natural to ask the following question.

\begin{question}
Is every non-zero-dimensional paratopological group (or rectifiable space) with a $bc$-base locally compact?
\end{question}

It is well known that a locally compact paratopological group is a topological group, hence we have the following question.

\begin{question}
Is every non-zero-dimensional paratopological group with a $bc$-base topological group?
\end{question}

\section{$k_{\omega}$-rectifiable spaces}
In \cite{LF4}, Lin defined the concept of a rectifiable complete space and showed that each locally compact rectifiable space is  rectifiable complete, and hence each compact rectifiable space is rectifiable complete. In this section, we will prove that each $k_{\omega}$-rectifiable space $G$ is rectifiable complete. First of all, we recall some concepts and show some technical lemmas.

\begin{definition}
(1) A net $\{x_{\alpha}\}_{\alpha\in\Gamma}$ in a rectifiable space $G$ is a {\it left Cauchy net} if for every neighborhood $U$ of $e$ in $G$ there exists an $\alpha_{0}\in\Gamma$ such that $q(x_{\alpha}, x_{\beta})\in U$ for every $\alpha, \beta\geq\alpha_{0}.$

(2) A filter $\xi$ of subsets of a rectifiable space $G$ is said to be a {\it left Cauchy filter} if for every neighborhood of $e$ in $G$ there exist an $F\in\xi$ and a point $a\in G$ such that $q(a, F)\subset U$.

(3) A rectifiable space $G$ is called {\it rectifiable complete} if every left Cauchy filter in $G$ converges. Obviously, a rectifiable space $G$ is rectifiable complete if and only if every left Cauchy net in $G$ converges.

(4) Let $G$ be a rectifiable space. If there exists a rectifiable complete space $H$ such that $G$ is dense in $H$ then we say that $H$ is the rectifiable completion of $G$ and denotes it by $\widehat{G}$.
\end{definition}

\begin{definition}Let $G, H$ be two rectifiable spaces and $f: G\rightarrow H$ be a mapping from $G$ to $H$. The mapping is called {\it a homomorphism} if for arbitrary $x, y\in G$ we have $f(p_{G}(x, y))=p_{H}(f(x), f(y)).$ Moreover, if $f$ is an one-to-one homomorphism mapping from $G$ onto $H$, then $f$ is called an {\it isomorphism}.
\end{definition}

In \cite{LF4}, Lin proved that if a rectifiable space $G$ has two rectifiable complete $\widehat{G}$ and $\widehat{H}$ then $\widehat{G}$ and $\widehat{H}$ are topologically isomorphic.

We shall first prove the following lemma which plays an important role in our proof of our main theorem in this section.

\begin{lemma}\label{compact-separated}
Let $F$ and $K$ be two non-empty compact subsets of a rectifiable space $G$, and let $P$ be a closed subset of $G$. If $e\in K\subset F\cdot K$ and $F\cdot K\cap P=\emptyset$, then there exists an open neighborhood $U$ of $e$ such that $F\cdot ((K\cdot U)\cdot U)\cap P=\emptyset$.
\end{lemma}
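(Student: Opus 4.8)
The plan is to reduce the statement to a tube-lemma (Wallace-type) argument for a suitable iterated multiplication map, using only the joint continuity of $p$ and the fact that $e$ is a right neutral element; curiously, the hypotheses $e\in K\subset F\cdot K$ will play no role in this implication.

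First I would record that $F\cdot K=p(F,K)$ is compact, being the continuous image of the compact set $F\times K$ under $p$. Since $P$ is closed and $F\cdot K\cap P=\emptyset$, the compact set $F\cdot K$ is contained in the open set $O:=G\setminus P$. Thus it suffices to produce an open neighborhood $U$ of $e$ with $F\cdot((K\cdot U)\cdot U)\subseteq O$.

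Next I would encode the set $F\cdot((K\cdot U)\cdot U)$ as the image of a single continuous map. Define $\Psi\colon G^{4}\to G$ by $\Psi(f,k,u,v)=p(f,p(p(k,u),v))$; this is continuous as a composition of the continuous multiplication $p$, where the order of the two applications of $U$ must be tracked carefully because $p$ need not be associative. Since $p(x,e)=x$ for every $x$, we get $\Psi(f,k,e,e)=p(f,k)$, so $\Psi\big((F\times K)\times\{(e,e)\}\big)=F\cdot K\subseteq O$. Writing the domain as $X\times Y$ with $X=G\times G$ carrying the coordinates $(f,k)$ and $Y=G\times G$ carrying $(u,v)$, the set $A:=F\times K\subseteq X$ is compact and $A\times\{(e,e)\}\subseteq\Psi^{-1}(O)$, an open subset of $X\times Y$.

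Now the tube lemma, applied to the compact factor $A$, yields an open neighborhood $N$ of $(e,e)$ in $Y=G\times G$ with $A\times N\subseteq\Psi^{-1}(O)$. Choosing an open neighborhood $U$ of $e$ in $G$ with $U\times U\subseteq N$, we obtain $\Psi(f,k,u,v)\in O$ for all $f\in F$, $k\in K$, $u,v\in U$. Since $F\cdot((K\cdot U)\cdot U)=\Psi(F\times K\times U\times U)\subseteq O=G\setminus P$, this is exactly the desired conclusion $F\cdot((K\cdot U)\cdot U)\cap P=\emptyset$. I expect the only genuinely delicate point to be the uniform choice of the neighborhood $U$ over the compact set $F\times K$, which is precisely what the tube lemma supplies; everything else is a direct unwinding of definitions, and the hypotheses $e\in K\subset F\cdot K$ seem to be needed only for the later application of the lemma rather than for this implication.
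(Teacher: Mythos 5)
Your proof is correct, and it reaches the conclusion by a genuinely different (and cleaner) route than the paper. You encode $F\cdot((K\cdot U)\cdot U)$ as the image of the single continuous map $\Psi(f,k,u,v)=p(f,p(p(k,u),v))$, note that $\Psi(F\times K\times\{(e,e)\})=F\cdot K\subseteq G\setminus P$ because $e$ is a right neutral element, and then apply the tube lemma to the compact slice $F\times K\times\{(e,e)\}$ inside the open set $\Psi^{-1}(G\setminus P)$ to extract a uniform neighborhood $U\times U$ of $(e,e)$; unwinding $\Psi$ gives exactly $F\cdot((K\cdot U)\cdot U)\subseteq G\setminus P$. The paper runs the same compactness principle by hand in two nested stages: a finite subcover over $F$ first produces, for each $y\in K$, a neighborhood $W_y$ of $e$ with $F\cdot(y\cdot W_y)\cap P=\emptyset$; then neighborhoods $U_y\subset V_y\subset W_y$ are shrunk so that $(y\cdot U_y)\cdot U_y\subset y\cdot V_y$ and $(y\cdot V_y)\cdot V_y\subset y\cdot W_y$, and a second finite subcover over $K$ yields $U=\bigcap_{y\in D}U_y$. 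Your version buys brevity and makes transparent that the hypothesis $e\in K\subset F\cdot K$ plays no role here (it is not used essentially in the paper's argument either), while the paper's version hands you the explicit nested neighborhoods at the cost of the bookkeeping in its conditions (1)--(5). The only point you should make explicit is the form of the tube lemma you invoke: for a compact subset $A$ of $X$ (not all of $X$) and an open $W\supseteq A\times\{y_0\}$ in $X\times Y$, there is a neighborhood $N$ of $y_0$ with $A\times N\subseteq W$; this follows from the standard tube lemma applied to the compact space $A$ itself, or directly by taking a finite subcover of basic open boxes and intersecting their $Y$-factors.
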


\begin{proof}
{\bf Claim}: For each $y\in K$, there exists an open neighborhood $W_{y}$ of $e$ such that $F\cdot (y\cdot W_{y})\cap P=\emptyset$.
\smallskip

Indeed, fix a point $y\in K$. For each $x\in F$, since $p(x, y)\not\in P$, there exists an open neighborhood $W_{y}^{x}$ of $e$ such that $$[(x\cdot W_{y}^{x})\cdot (y\cdot W_{y}^{x})]\cap P=\emptyset.$$ Obviously, each $(x\cdot W_{y}^{x})\cdot (y\cdot W_{y}^{x})$ is an open neighborhood of $x\cdot y$. Then $$\{x\cdot W_{y}^{x}): x\in F\}\ \mbox{and}\ \{(x\cdot W_{y}^{x})\cdot (y\cdot W_{y}^{x}): x\in F\}$$ are open covers of $F$ and $F\cdot y$, respectively. It follows from the compactness of $F$ and $F\cdot y$, there exists a finite subset $C_{y}$ of $F$ such that $$F\subset \bigcup_{x\in C_{y}}x\cdot W_{y}^{x}\  \mbox{and}\ F\cdot y\subset \bigcup_{x\in C_{y}}((x\cdot W_{y}^{x})\cdot (y\cdot W_{y}^{x})).$$ Put $W_{y}=\bigcap_{x\in C_{y}} W_{y}^{x}$. Then $F\cdot (y\cdot W_{y})\cap P=\emptyset.$
Obviously, it suffices to prove that $$z\cdot (y\cdot W_{y})\cap P=\emptyset$$ for each $z\in F$. Since $F\subset \bigcup_{x\in C_{y}}x\cdot W_{y}^{x}$, there exists a point $x\in C_{y}$ such that $z\in x\cdot W_{y}^{x}$. Then
$$z\cdot (y\cdot W_{y})\subset (x\cdot W_{y}^{x})\cdot (y\cdot W_{y})\subset (x\cdot W_{y}^{x})\cdot (y\cdot W_{y}^{x})\subset G\setminus P. $$ Therefore, the Claim holds.

For each $y\in K$, since $p(y, e)=y$, it follows from the Claim that there exist open neighborhoods $U_{y}$ and $V_{y}$ of $e$ satisfying the following conditions:
\smallskip

(1) $U_{y}\subset V_{y}$;
\smallskip

(2) $V_{y}\subset W_{y}$;
\smallskip

(3) $F\cdot (y\cdot W_{y})\cap P=\emptyset$;
\smallskip

(4) $(y\cdot V_{y})\cdot V_{y}\subset y\cdot W_{y}$;
\smallskip

(5) $(y\cdot U_{y})\cdot U_{y}\subset y\cdot V_{y}$.
\smallskip

Then the family $\{F\cdot (y\cdot U_{y}): y\in K\}$ and $\{y\cdot U_{y}: y\in K\}$ are open covers of $F\cdot K$ and $K$ respectively.  It follows from the compactness of $F\cdot K$ and $K$ that there exists a finite subset $D$ of $K$ such that $$F\cdot K\subset \bigcup_{y\in D}F\cdot (y\cdot U_{y})$$ and $$K\subset  \bigcup_{y\in D}y\cdot U_{y}.$$ Put $U=\bigcap_{y\in D} U_{y}$. Then $F\cdot ((K\cdot U)\cdot U)\cap P=\emptyset.$

Indeed, it suffices to prove that $$F\cdot ((h\cdot U)\cdot U)\cap P=\emptyset$$ for each $h\in K$. Since $K\subset \bigcup_{y\in D}x\cdot U_{y}$, there exists a point $y\in D$ such that $h\in y\cdot U_{y}$. Then
\begin{eqnarray}
F\cdot ((h\cdot U)\cdot U)&\subset&F\cdot (((y\cdot U_{y})\cdot U)\cdot U) \nonumber\\
&\subset&F\cdot (((y\cdot U_{y})\cdot U_{y})\cdot U) \nonumber\\
&\subset&F\cdot ((y\cdot V_{y})\cdot U) \nonumber\\
&\subset&F\cdot ((y\cdot V_{y})\cdot V_{y}) \nonumber\\
&\subset&F\cdot (y\cdot W_{y}) \nonumber\\
&\subset&G\setminus P. \nonumber
\end{eqnarray}
\end{proof}

The following lemma was proved in \cite{PL}.

\begin{lemma}\label{dense}\cite{PL}
Let $G$ be a rectifiable space. If $A\subset G$ and $V$ is an open
neighborhood of the right neutral element $e$ of $G$, then $\overline{A}\subset A\cdot V$.
\end{lemma}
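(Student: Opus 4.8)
The plan is to recast the inclusion $\overline{A}\subset A\cdot V$ pointwise and exploit the rectifiable identities from Theorem~\ref{t4}. Fix an arbitrary point $z\in\overline{A}$; it suffices to produce $a\in A$ and $v\in V$ with $z=a\cdot v=p(a,v)$, since then $z\in A\cdot V$ and $z$ was arbitrary. The guiding observation is that the identity $p(a,q(a,z))=z$ already expresses $z$ as a product whose first factor is \emph{any} prescribed $a$; consequently the whole problem reduces to choosing $a\in A$ so that the second factor $v:=q(a,z)$ falls inside the given neighborhood $V$ of $e$.

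To arrange $q(a,z)\in V$, I would use the continuity of the map $q\colon G\times G\to G$ at the diagonal point $(z,z)$, together with the identity $q(z,z)=e\in V$. Since $q^{-1}(V)$ is open and contains $(z,z)$, there is a basic open box $O_{1}\times O_{2}\subset q^{-1}(V)$ with $z\in O_{1}\cap O_{2}$; setting $O=O_{1}\cap O_{2}$ gives an open neighborhood of $z$ with $q(O\times O)\subset V$. Because $z\in\overline{A}$, the neighborhood $O$ must meet $A$, so I can pick a point $a\in A\cap O$. Then $(a,z)\in O\times O$, whence $v:=q(a,z)\in V$, and the identity yields
$$z=p\bigl(a,q(a,z)\bigr)=a\cdot v\in A\cdot V.$$
Letting $z$ range over $\overline{A}$ establishes $\overline{A}\subset A\cdot V$.

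There is no real analytic difficulty here: the entire content of the lemma is the single rectifiable identity $p(a,q(a,z))=z$, which lets one manufacture, for each approximating point $a\in A$ near $z$, the correcting factor $v=q(a,z)$. The only point requiring care is the correct bookkeeping of the continuity of $q$ at the diagonal—ensuring the neighborhood $O$ is taken in \emph{both} coordinates so that the pair $(a,z)$ lands in $O\times O$—and the verification that $q(z,z)=e$ places the target value inside $V$. Once these are in place, density of $A$ supplies the required $a$ and the computation is immediate.
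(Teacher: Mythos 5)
Your proof is correct: the identity $p(a,q(a,z))=z$ together with $q(z,z)=e$ and the joint continuity of $q$ at $(z,z)$ is exactly the standard argument for this fact, and your bookkeeping with the box $O_1\times O_2$ and the shrunken neighborhood $O=O_1\cap O_2$ is sound. The paper itself gives no proof (it cites the lemma from Peng and Guo), but your argument is the expected one and is complete.
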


We say that the topology  of a space $X$ is {\it determined by a family $\mathscr{C}$} of its subsets provided that a set $F\subset X$ is closed in $X$ iff $F\cap C$ is closed in $C$, for each $C\in\mathscr{C}$. If the topology of a space $X$ is determined by an increasing sequence $\{K_{n}: n\in\omega\}$ of compact subsets in $X$, then $X$ is called a $k_{\omega}$-space.

\begin{theorem}
Let $G$ be a rectifiable space. If $G$ is a $k_{\omega}$-space, then $G$ is a rectifiable complete space.
\end{theorem}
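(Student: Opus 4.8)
The plan is to reduce rectifiable completeness to the existence of cluster points and then to extract such a point from the $k_{\omega}$-structure. Recall that for fixed $a\in G$ the left translation $L_a=p(a,\cdot)$ is a homeomorphism of $G$ with inverse $q(a,\cdot)$, since $p(a,q(a,y))=q(a,p(a,y))=y$; in particular the sets $a\cdot U$, with $U$ a neighbourhood of $e$, form a neighbourhood base at $a$. Using this I would first record the standard fact that a left Cauchy net which clusters at a point converges to it. Given a left Cauchy net $\{x_{\alpha}\}$ clustering at $x_{0}$ and a neighbourhood $W$ of $x_{0}$, continuity of $p$ at $(x_{0},e)$ (recall $p(x_{0},e)=x_{0}$) yields a neighbourhood $V$ of $x_{0}$ and a neighbourhood $U$ of $e$ with $p(V,U)\subset W$; choosing $\alpha_{0}$ with $q(x_{\alpha},x_{\beta})\in U$ for $\alpha,\beta\geq\alpha_{0}$ and then $\alpha_{1}\geq\alpha_{0}$ with $x_{\alpha_{1}}\in V$, one gets $x_{\beta}=p(x_{\alpha_{1}},q(x_{\alpha_{1}},x_{\beta}))\in p(V,U)\subset W$ for every $\beta\geq\alpha_{1}$. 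Hence it suffices to show that every left Cauchy net in $G$ has a cluster point.

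Let $\{K_{n}:n\in\omega\}$ be an increasing sequence of compact sets determining the topology of $G$. I would use two standard consequences of the $k_{\omega}$-property: every compact subset of $G$ is contained in some $K_{n}$, and any set meeting each $K_{n}$ in a finite set is closed and discrete. The key reduction is to prove that a left Cauchy net $\{x_{\alpha}\}$ is eventually contained in a single $K_{n}$: once this is known, the tail is a net in the compact set $K_{n}$ and therefore clusters, and its cluster point is a cluster point of $\{x_{\alpha}\}$, which by the previous paragraph forces convergence. In the language of filters, it is equivalent to produce a member of the left Cauchy filter $\xi$ with compact closure, after which the finite intersection property of $\{\overline{F}:F\in\xi\}$ inside that compact closure delivers the cluster point.

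Thus the heart of the proof is to rule out an \emph{escaping} left Cauchy net, that is, one that is eventually outside every $K_{n}$. Assuming such a net existed, I would extract a family $\{z_{n}\}$ with $z_{n}\notin K_{n}$; by the $k_{\omega}$-property this family is closed and discrete. The aim is to build a single neighbourhood $U$ of $e$ with $q(z_{n},z_{m})\notin U$ for all $n\neq m$, equivalently $z_{m}\notin z_{n}\cdot U$: the existence of such a $U$ contradicts the left Cauchy condition, which would require $q(z_{n},z_{m})\in U$ for all large indices. For a single $n$ the separation of $z_{n}$ from the closed set $\{z_{m}:m\neq n\}$ is immediate and yields a neighbourhood $U_{n}$ of $e$; the difficulty is to make this choice uniform in $n$, so that one $U$ works simultaneously. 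This is exactly the uniform control supplied by Lemma~\ref{compact-separated}: applied to compact pieces of the filtration together with the closed set $P=\{z_{n}:n\in\omega\}$, it allows the neighbourhoods to be fattened while staying off $P$, and Lemma~\ref{dense} is used to pass between the translates $z_{n}\cdot U$ and their closures when gluing the local data along the sequence $\{K_{n}\}$.

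The step I expect to be the main obstacle is precisely this uniformisation: manufacturing one neighbourhood $U$ of $e$ that separates each $z_{n}$ from all the other $z_{m}$ at once. In a general non–locally compact space no such $U$ need exist, and it is here that both the $k_{\omega}$-structure (to control compacta and to verify openness of the glued set coordinatewise on each $K_{n}$) and the equicontinuity-type statement of Lemma~\ref{compact-separated} are indispensable. By contrast, the remaining ingredients—the reduction to cluster points and the finite intersection argument inside a compact closure—are routine.
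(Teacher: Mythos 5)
Your overall strategy---reduce to the existence of a cluster point, then rule out a Cauchy net/filter that escapes the $k_{\omega}$-filtration by building a single neighbourhood of $e$ uniformly over the filtration via Lemma~\ref{compact-separated} and Lemma~\ref{dense}---is the same strategy as the paper's, and your preliminary reductions (Cauchy plus cluster point implies convergence; either the net is frequently in some $K_{n}$ or it escapes) are fine. But there is a genuine gap: the entire technical content of the theorem lies in the uniformisation step, which you explicitly flag as ``the main obstacle'' and then do not carry out. The paper's treatment of this step is a nontrivial induction: it first enlarges the filtration to compacta $K_{n}^{\ast}$ with $p(K_{n}^{\ast},K_{n}^{\ast})\subset K_{n+1}^{\ast}$, chooses closed filter elements $C_{n}$ with $K_{n}^{\ast}\cap C_{n}=\emptyset$ (possible by compactness of $K_{n}^{\ast}$ and the empty adherence of the filter), and then recursively constructs sets $V_{i}$ open in $K_{i}^{\ast}$ with $(K_{j}^{\ast}\cdot\overline{V_{i}})\cap C_{j+1}=\emptyset$ for all $j\le i$, gluing them to an open $V\ni e$ by the $k_{\omega}$-property. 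Asserting that the two lemmas ``supply'' this control is not a proof.

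Moreover, the particular reduction you chose creates two obstacles that the paper's formulation avoids. First, your separating set is $P=\{z_{m}:m\in\omega\}$, and every compactum of the filtration contains finitely many of the $z_{m}$; since Lemma~\ref{compact-separated} assumes $e\in K$, we get $F=F\cdot e\subset F\cdot K$, so the hypothesis $F\cdot K\cap P=\emptyset$ fails for the relevant $F$. You would have to excise the exceptional points at each level and then control an infinite family of exceptional pairs $(n,m)$ with $z_{m}$ close to $z_{n}$'s compactum---which is precisely the uniformisation left undone. The paper instead separates $K_{j}^{\ast}$ from the closed sets $C_{j+1}$, which are genuinely disjoint from $K_{j+1}^{\ast}$, so the lemma applies verbatim. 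Second, your final contradiction requires two indices $\beta_{n},\beta_{m}$ of the extracted sequence to dominate the $\alpha_{0}$ furnished by the Cauchy condition for your $U$; a countable increasing sequence in an arbitrary directed set need not be cofinal, so this step can fail for nets (it is harmless only for sequences). Working with the filter of closed tails, as the paper does, removes the issue: the Cauchy condition yields $C\subset x\cdot V$ with $x\in K_{n}$, and $(K_{n}\cdot V)\cap C_{n+1}=\emptyset$ then contradicts the finite intersection property of the filter, with no cofinality considerations.
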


\begin{proof}
Assume on the contrary that the rectifiable space $G$ fails to rectifiable complete. Then $G$ contains a left Cauchy filter $\zeta$ of closed subsets with empty intersection. Since $G$ is a $k_{\omega}$-space, there exists a sequence $\{K_{n}: n\in\omega\}$ of compact subsets in $G$ such that the topology of $G$ is determined by the family $\{K_{n}: n\in\omega\}$.
\smallskip

Let $$L_{0}=p(K_{0}, K_{0})\cup q(K_{0}, K_{0})$$ and $$L_{1}=p(K_{1}, K_{1})\cup q(K_{1}, K_{1})\cup p(L_{0}, L_{0})\cup q(L_{0}, L_{0})\cup p(K_{1}, L_{0})\cup q(K_{1}, L_{0}).$$ By induction, we can construct a sequence $\{L_{n}: n\in\omega\}$ of sets in $G$ such that, for each $n\geq 1$, we have $$L_{n}=p(K_{n}, K_{n})\cup q(K_{n}, K_{n})\cup p(L_{n-1}, L_{n-1})\cup q(L_{n-1}, L_{n-1})\cup p(K_{n}, L_{n-1})\cup q(K_{n}, L_{n-1}).$$

Then we can construct a sequence $\{K_{n}^{\ast}: n\in\omega\}$ of sets in $G$ such that $K_{0}^{\ast}=L_{0}$ and, for each $n\geq 1$, we have
\smallskip
$$K_{n}^{\ast}=p(L_{n}, K_{n-1}^{\ast}).$$Moreover, it is easy to see that $p(K_{n}^{\ast}, K_{n}^{\ast})\subset K_{n+1}^{\ast}$ for each $n\in\omega$.
\smallskip

Since each $K_{n}\subset K_{n}^{\ast}$, the compact sets $K_{n}^{\ast}$ generates the original topology of $G$, and hence there exists an element $C_{n}\in\zeta$ such that $K_{n}^{\ast}\cap C_{n}=\emptyset$. Let us construct a sequence $\{V_{i}: i\in\omega\}$ of sets in $G$ satisfying the following conditions for each $i\in\omega$:
\smallskip

(1) $e\in V_{i}\subset K_{i}^{\ast}$ and $V_{i}$ is open in $K_{i}^{\ast}$;
\smallskip

(2) $V_{i}\subset V_{i+1}$;
\smallskip

(3) $(K_{j}^{\ast}\cdot \overline{V_{i}})\cap C_{j+1}=\emptyset$, for each $j\leq i.$
\smallskip

By Lemma~\ref{compact-separated}, there exists an open neighborhood $U_{0}$ of $e$ in $G$ such that $$(K_{0}^{\ast}\cdot \overline{U_{0}})\cap C_{1}=\emptyset.$$ Put $V_{0}=U_{0}\cap K_{0}^{\ast}$. Assume that we have defined the sets $V_{0}, \cdots, V_{n}$ satisfying (1)-(3). It follows from (3) that $$(K_{i}^{\ast}\cdot \overline{V_{n}})\cap C_{i+1}=\emptyset$$ for each $i=0, 1, \cdots, n.$ Moreover, it easy to see that each $(K_{n+1}^{\ast}\cdot \overline{V_{n}})\subset K_{n+2}^{\ast}$, so that the choice of the set $C_{n+2}$ implies that $$(K_{n+1}^{\ast}\cdot \overline{V_{n}})\cap C_{n+2}=\emptyset.$$ For each $n\in\omega$, since $K_{n+1}^{\ast}\cdot \overline{V_{n}}$ is compact, it follows from Lemma~\ref{compact-separated} that there is an open neighborhood $U$ of $e$ in $G$ such that, for each $i=0, 1, \cdots, n+1$, we have$$(K_{i}^{\ast}\cdot ((\overline{V_{n}}\cdot U)\cdot U))\cap C_{i+1}=\emptyset.$$Put $$V_{n+1}=(V_{n}\cdot U)\cap K_{n+1}^{\ast}.$$ By Lemma~\ref{dense}, we have $$\overline{V_{n+1}}\subset V_{n+1}\cdot U \subset (V_{n}\cdot U)\cdot U,$$ whence it follows that the set $V_{n+1}$ satisfies (3). Obviously, $V_{n+1}$ satisfies (1) and (2).
\smallskip

Put $V=\bigcup_{n=0}^{\infty}V_{n}.$ By (1) and (2), we have $$V\cap K_{n}=\bigcup_{i=0}^{\infty}(V_{i}\cap K_{n})=\bigcup_{i=n}^{\infty}(V_{i}\cap K_{n})$$ is open in $K_{n}$ for each $n\in\omega$. Since the family $\{K_{n}: n\in\omega\}$ determines the topology of $G$, we see that $V$ is open in $G$. From (3) and the definition of $V$ it follows that $$(K_{j}\cdot V)\cap C_{j+1}=\emptyset, j\in\omega.$$ Since $\zeta$ is a left Cauchy filter in $G$, we can take a $C\in\zeta$ and $x\in G$ such that $q(x, C)\subset V$, which implies that $C\subset x\cdot V$. Hence there exists an integer $n\geq 1$ such that $x\in K_{n}$, then $x\cdot V\cap C_{n+1}=\emptyset$. Since $C\subset x\cdot V$, and therefore $C\cap C_{n+1}=\emptyset$, which is a contradiction with $C, C_{n+1}\in\zeta$.

Therefore,  $G$ is a rectifiable complete space.
\end{proof}

\end{document}